\theoremstyle{plain}
\newtheorem{theorem}{Theorem}[section]
\theoremstyle{definition}
\newtheorem{defi}[theorem]{Definition}
\newtheorem{example}[theorem]{Example}
\newcommand{\C}{\mathds{C}}
\newcommand{\N}{\mathds{N}}
\newcommand{\R}{\mathds{R}}
\newcommand{\Z}{\mathds{Z}}
\newcommand{\calF}{\mathcal F}
\newcommand{\calM}{\mathcal M}
\newcommand{\ds}{\displaystyle}
\renewcommand{\deg}{\operatorname{deg}}
\renewcommand{\o}{\operatorname{o}}
\newcommand{\Lp}[1]{{\mathbf L}^p(#1)}
\newcommand{\Lq}[1]{{\mathbf L}^q(#1)}
\newcommand{\Linfty}[1]{{\mathbf L}^\infty(#1)}
\newcommand\Ker{\operatorname{ker}}
\DeclareMathOperator{\Chi}{Chi}
\DeclareMathOperator{\Par}{par}
\DeclarePairedDelimiter\abs{\lvert}{\rvert}
\DeclarePairedDelimiter\norm{\lVert}{\rVert}
\let\oldabs\abs
\def\abs{\@ifstar{\oldabs}{\oldabs*}}
\let\oldnorm\norm
\def\norm{\@ifstar{\oldnorm}{\oldnorm*}}
\title[Norms of infinite graphs]{Norms, kernels and eigenvalues of some infinite graphs}
\author[Agrawal et al.]{Aahan Agrawal, Astrid Berge, Seth Colbert-Pollack, Rub\'en
A. Mart\'inez-Avenda\~no and Elyssa Sliheet}
\address{Department of Computer Science\\ University of Illinois at Urbana--Champaign\\ Urbana, Illinois, USA}
\email{aahanagrawal123@gmail.com} 
\address{University of Washington}
\email{astridi1@uw.edu}
\address{Department of Mathematics\\
Kenyon College\\
Gambier, Ohio, USA}
\email{colbertpollacks@kenyon.edu}
\address{Departamento Acad\'emico de Matem\'aticas\\
Instituto Tecnol\'ogico Aut\'onomo de M\'exico \\
Mexico City \\
Mexico}
\email{rubeno71@gmail.com}
\address{Department of Mathematics and Computer Science \\
  Southwestern University \\Georgetown, Texas, USA}
\email{elyssasliheet@gmail.com}
\subjclass[2010]{05C63, 05C50, 47A05, 47A10, 47A75}
\thanks{The results of this research were obtained during the Research
  Experience for Undergraduates Program at California State University
  Channel Islands, in the summer of 2017.  This research was funded by
  NSF grant DMS-1359165 and California State University Channel
  Islands. Rub\'en A. Mart\'inez-Avenda\~no's research is partially
  supported by the Asociaci\'on Mexicana de Cultura A.C}
\begin{document}

\maketitle

\section{Introduction}

The study of infinite graphs, and in particular of their spectral
properties, is relatively new. Perhaps one of the first investigations on the
subject was the 1982 paper by Mohar~\cite{Mohar1}, which examined the
basic properties of the spectrum of the  adjacency matrix of an
infinite graph. His paper gave norm estimates and described other functional
analytic properties. In \cite{MoWo}, Mohar and Woess give a survey
of the results known up to 1989. It seems that many results about the
spectrum of infinite graphs were proven in the context of harmonic
analysis on graphs and on discrete groups, and can be obtained as
corollaries of those results, especially in the case of regular graphs
and regular trees.

Nevertheless, the spectra of infinite graphs has only
recently been studied in greater depth, perhaps motivated by the many
results that have been obtained for the spectrum of the Laplacian of
an infinite graph. For example in \cite{Golinskii1} Golinskii computes the spectra in
$L^2$ of several graphs which are formed by attaching an ``infinite
ray'' to a finite graph. Further results on the spectrum of infinite
graphs can be found in \cite{BiMoSh, DvMo, Golinskii2, LeNy, Nizhnik},
to cite just a few.

In the present paper, we deal with the adjacency matrix of an
infinite, locally finite graph. We consider this matrix as an
operator, which we call the {\em shift operator} (usually called the {\em adjacency operator} in the literature), defined on the set of $p$-summable functions, themselves defined on the vertices of the graph. 

We first obtain some results about the norm of the shift operator: we
show that the shift on the $L^p$ space of a graph $G$ is bounded if and only
if $G$ has bounded degree. We show the norm is bounded by the maximum
degree (Theorem \ref{th:bound}); this result is probably well-known, but we include a proof since it
seems that only the proof for one direction is available in the literature. More
importantly, we show that for a class of graphs, the norm is exactly
the maximum degree (Theorem \ref{th:sharp_bound}). This improves an (unmentioned) corollary of
Theorem 2.1 in  \cite{BiMoSh}, where it is shown that, under
stronger hypotheses than ours, the spectral radius in $L^2$ is the
maximum degree (from this, it follows that the norm equals the maximum
degree). We also obtain a norm estimate for trees in Theorem
\ref{th:bound_tree}; a special case of this theorem appears to have been noticed in the
literature but the proof for the case $p\neq 2$ does not seem to be
available elsewhere. We show  (Theorem \ref{th:sharp_bound_tree}) that this estimate is sharp if the tree
is ``almost regular'', which does not seem to have been observed elsewhere.

We then move to study the triviality of the kernel of the shift
operator on the $L^p$ space of a tree. This question looks to have been
overlooked in the literature (a related result can be found in
\cite{CoMa}). We show (Theorem \ref{th:p_implies_trivial}) that the
kernel of the shift is trivial if some conditions on $p$, on the
``essential'' maximum degree, and on the ``essential minimum'' degree
hold; and we show that these conditions cannot be improved (Example
\ref{ex:p_implies_trivial}).  We also show, in Theorem
\ref{th:p_implies_nontrivial}, that if some conditions hold, then the
kernel of the shift is nontrivial, and we give an example (Example
\ref{ex:p_implies_nontrivial}) which shows that, in some cases, these
conditions cannot be improved. We leave open the question of whether
these conditions can be improved in the rest of the cases.

Lastly, in Section \ref{se:spectrum_tails} we describe an elementary
method to find the eigenvalues of some graphs obtained by attaching a
ray to a finite graph. Results similar to ours have been obtained in
\cite{Golinskii1}, but we believe our method is more elementary. We
give several examples of this method: the kite with an infinite tail,
the fly-swatter with an infinite tail, and the comb with an infinite
tail. We also show that finding these eigenvalues actually gives the
full spectrum of the shift, and finish by finding the spectrum of the infinite comb.

We would like to thank the referee for valuable comments, which improved the presentation of the paper.

\section{Preliminaries}

Let us set the notation that we will use throughout this
article. Recall that a {\em graph} $G$ is a pair $(V,E)$, where $V$ is a
nonempty set, called the set of {\em vertices} of the graph, and $E$ is a
collection of subsets of $V$ of cardinality two. The set $E$ is called the
set of {\em edges} of the graph. If $\{ u, v\} \in E$ we will say thay $u$
and $v$ are {\em adjacent} and denote this relation by $u \sim v$. A
{\em path of length $n$} between two distinct vertices $u$ and $v$ is a finite sequence $\{ u= u_0, u_1,
u_2, \dots u_{n-1}, u_n=v\}$ of distinct vertices such that $u_{j-1} \sim u_{j}$
for $j=1, 2, \dots, n$. We say that the graph is {\em connected} if there
exists a path between any pair of distinct vertices. All of the
graphs we will consider here are assumed to be connected. Clearly
there exists a path of minimum length between any two vertices $u$ and
$v$ and we denote the length of such path by $d(u,v)$, which we define to be zero
if $u=v$. It is clear that $d$ defines a metric on the set of vertices of
the graph.

Here, we will deal mostly with infinite graphs. A graph is {\em
  infinite} if the set $V$ is countably infinite. All of the graphs
here will be {\em locally finite}, which means that the set $\{ v \in V \, :  \:  u
\sim v \}$ is finite for every $u \in V$. The {\em degree} of $u$ is
the cardinality of said set, which we denote by $\deg(u)$. We will say
that a locally finite graph has {\em bounded degree} if the set $\{
\deg(v) \, : \, v \in V \}$ is bounded (this is also called {\em uniformly locally finite} in the literature).

All of our graphs will have a distinguished vertex, which we will
usually denote by the letter $\o$. We define $|v|:=d(\o,v)$. We denote
by $\gamma(n)$ the cardinality of the set $\{ v \in V \, : \, |v|= n\}$.

We say that a graph $T$ is a {\em tree} if for every pair of distinct vertices
there is only one path between them. The distinguished vertex of a
tree is called the {\em root} of the tree and we refer to $T$ as a
{\em rooted tree}. For every vertex $v \neq \o$ in a tree, there
exists a unique vertex $w$ such that $d(v,w)=1$ and $w$ is in the path
from $\o$ to $v$. We call such a vertex the parent of $v$ and denote
it by $\Par(v)$. We define $\Par^n(v)$ inductively: obviously
$\Par^1(v):=\Par(v)$ for every vertex $|v|\geq 1$ and for $n \in \N$, with
$n \geq 2$ we define $\Par^n(v):=\Par(\Par^{n-1}(v))$ for every vertex $|v|\geq n$.
For every $v \in V$, the set $\{ u \in V \, : u \sim v, u \neq \Par(v) \}$ is
called the set of {\em children} of $v$ and is denoted by
$\Chi(v)$. Also, for every $v \in V$ and $n \in \N$ with $n \geq 2$, we set
\[
\Chi^n(v):=\{ u \in V \: \, \Par^n(u)=v \}.
\] 
If $\Chi(v)$ is empty for some $v \in V$, we say that $v$ is a {\em
  leaf} of $T$. If $T$ has no leaves, we say the tree is leafless.

We denote the vector space of all functions $f : V \to \C$ as
$\calF$. As is customary, we denote by $\Lp{G}$ the set
\[
\{ f \in \calF \, : \, \| f \|_p:= \left(\sum_{v \in V} |f(v)|^p \right)^{1/p} < \infty \}
\]
if $1 \leq p < \infty$, and by $\Linfty{G}$ the set
\[
\{ f \in \calF \, : \, \| f \|_\infty:= \sup_{v \in V} |f(v)| < \infty \}.
\]
Clearly $\Lp{G}$ is a Banach space, since it is isomorphic to
$\ell^p(V)$.

Our main object of study is the {\em shift operator} $S$. This is
defined on $\calF$ as
\[
(Sf)(u)=\sum_{v \sim u} f(v). 
\]
If the graph is finite, the matrix of $S$ with respect to the
canonical basis of the vector space  $\calF$ is the
well-known and much-studied {\em adjacency} matrix of the graph.

We denote by $\chi_A$ the characteristic function of the set $A
\subset V$. Clearly $\chi_A$ is in $\Linfty{G}$ but, for $1\leq p <
\infty$, the function $\chi_A$ is in $\Lp{G}$ if and only if $A$ is
finite. If $A$ is a singleton $\{v\}$ we write $\chi_v:=\chi_A$.

The following version of Jensen's inequality will be used. If
$\{a_1, a_2, \dots, a_n \}$ are nonnegative numbers, then
\[
\left(\frac{a_1+a_2+\dots + a_n}{n}\right)^p \leq \frac{a_1^p+a_2^p+\dots + a_n^p}{n}
\]
for $1\leq p < \infty$. We will also use Young's inequality: if $a, b
\geq 0$ then $ a b \leq \frac{a^p}{p} + \frac{b^q}{q}$
for $1<  p < \infty$ and $q=\frac{p}{p-1}$. Lastly, we will use the
following inequality, which follows from the convexity of the function
$t \mapsto t^p$ (for $1 \leq p < \infty$) and a trivial induction argument: if
$\{a_1, a_2, \dots, a_n \}$ are nonnegative numbers, then
\[
\left(a_1+a_2+\dots + a_n\right)^p \geq a_1^p+a_2^p+\dots + a_n^p
\]
for $1\leq p < \infty$.

From now on every graph is assumed to be locally finite and connected.

\section{Boundedness}

In this section we study the norm of the shift. This has been
studied for the case $p=2$ in several places (see for example, Theorem
3.2 in \cite{Mohar1} or Theorem 3.1 in \cite{MoWo}). The ``if'' part
of the following result can be found in \cite{vB}, but we have not been
able to find a proof of the ``only if'' part for arbitrary $p \geq 1$,
although it is probably well-known. For completeness, we have decided
to include here the proof of both directions (our proof differs from
the proof of Theorem 3.1 in \cite{vB}).

\begin{theorem}\label{th:bound}
Let $G=(V,E)$ be a graph and let $1\leq p \leq \infty$. Then $S$ is
bounded in $\Lp{G}$ if and only if $G$ has bounded degree. In this case,
\[
\norm{S} \leq \max\{\deg(v) \, : \, v \in V\}.
\]
\end{theorem}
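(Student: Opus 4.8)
The plan is to prove both directions simultaneously while tracking the constant, treating $1 \le p < \infty$ and $p = \infty$ separately. For the bound (the ``if'' direction), set $M := \max\{\deg(v) : v \in V\}$, which is finite exactly when $G$ has bounded degree, and fix $f \in \Lp{G}$ with $1 \le p < \infty$. The first step is the triangle inequality inside the power, $\abs{(Sf)(u)}^p \le \bigl(\sum_{v \sim u} \abs{f(v)}\bigr)^p$. Since the inner sum has exactly $\deg(u)$ terms, the version of Jensen's inequality recorded in the Preliminaries gives $\bigl(\sum_{v \sim u} \abs{f(v)}\bigr)^p \le \deg(u)^{p-1} \sum_{v \sim u} \abs{f(v)}^p \le M^{p-1}\sum_{v \sim u}\abs{f(v)}^p$. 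Summing over all $u \in V$ and interchanging the order of summation in the resulting nonnegative double sum (legitimate by Tonelli), the identity $\sum_{u}\sum_{v \sim u}\abs{f(v)}^p = \sum_{v}\deg(v)\,\abs{f(v)}^p$ appears, since each vertex $v$ is counted once for each of its $\deg(v)$ neighbors. Bounding $\deg(v) \le M$ then yields $\norm{Sf}_p^p \le M^p\norm{f}_p^p$, i.e.\ $\norm{S} \le M$.

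The case $p = \infty$ is easier and handled directly: for each $u$ we have $\abs{(Sf)(u)} \le \sum_{v \sim u}\abs{f(v)} \le \deg(u)\,\norm{f}_\infty \le M\norm{f}_\infty$, so taking the supremum over $u$ gives $\norm{S}\le M$ again.

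For the ``only if'' direction I would argue contrapositively: assuming $G$ has unbounded degree, I produce unit vectors on which $S$ is arbitrarily large. For $1 \le p < \infty$ the natural test functions are the single-vertex indicators $\chi_u$. One computes $\norm{\chi_u}_p = 1$, while $S\chi_u$ equals $1$ on each neighbor of $u$ and $0$ elsewhere, so $\norm{S\chi_u}_p = \deg(u)^{1/p}$; choosing vertices $u$ of arbitrarily large degree forces $\norm{S} \ge \deg(u)^{1/p} \to \infty$. This witness degenerates at $p = \infty$ (it only gives the ratio $1$), so there I would instead take $f = \chi_A$ with $A = \{v \in V : v \sim u\}$, the neighborhood of $u$; then $\norm{f}_\infty = 1$ and $(Sf)(u) = \deg(u)$, whence $\norm{Sf}_\infty \ge \deg(u)$, again unbounded.

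The inequalities themselves are routine; the step deserving the most care is the interchange of summation together with the bookkeeping identity $\sum_u\sum_{v\sim u}\abs{f(v)}^p = \sum_v\deg(v)\,\abs{f(v)}^p$, since the exact value of the constant $M$ rests on correctly counting how many times each vertex is visited. The only other point to watch is that the clean $\chi_u$ witness must be swapped for the neighborhood indicator when $p = \infty$.
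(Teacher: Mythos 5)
Your proof is correct. The ``if'' direction is essentially the paper's argument: triangle inequality, Jensen's inequality with the $\deg(u)^{p-1}$ factor, and then the double sum controlled by $M$; you merely make explicit the counting identity $\sum_u\sum_{v\sim u}\abs{f(v)}^p=\sum_v\deg(v)\abs{f(v)}^p$ that the paper leaves implicit. The ``only if'' direction, however, is genuinely different. The paper extracts a sequence of vertices $v_n$ with $\deg(v_n)\geq 2^{np}$, builds a single function $f=\sum_n 2^{-n}\chi_{v_n}\in\Lp{G}$, and shows via the superadditivity inequality $(a_1+\dots+a_n)^p\geq a_1^p+\dots+a_n^p$ that $Sf\notin\Lp{G}$; i.e.\ it shows $S$ fails even to map $\Lp{G}$ into itself. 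You instead test $S$ on the normalized witnesses $\chi_u$ (for $p<\infty$) and on neighborhood indicators (for $p=\infty$), obtaining $\norm{S\chi_u}_p=\deg(u)^{1/p}$ and hence an unbounded ratio. Your route is shorter and avoids both the construction of the infinite-support function and the superadditivity lemma; it also dovetails with the paper's later remark that $\chi_v$ and $\chi_A$ give sharpness at $p=1$ and $p=\infty$. What the paper's construction buys in exchange is the slightly stronger conclusion that there is a single $p$-summable $f$ whose image is not $p$-summable, rather than merely that the norms of images of unit vectors are unbounded; either conclusion disproves boundedness, so both proofs are complete.
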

\begin{proof} 
First, assume that $G$ has bounded degree and let $M:= \max\{\deg(v)
\, : \, v \in V\}$. 

\begin{enumerate}
\item Suppose $1 \leq p < \infty$. Then, for every $f \in \Lp{G}$
\begin{align*}
\norm{Sf}_{p}^p = \sum_{v \in V}\abs{\sum_{u \sim v} f(u)}^p 
&\leq \sum_{v \in V}  \left(\sum_{u \sim v}\abs{f(u)}\right)^p  \\
&\leq \sum_{v \in V} \deg(v)^{p-1} \sum_{u \sim v}\abs{f(u)}^p \quad \text{
  (by Jensen's inequality) }\\
&\leq   M^{p-1} \sum_{v \in V} \sum_{u \sim v} \abs{f(u)}^p \\
&\leq M^p \norm{f}_{p}^p. 
\end{align*}
Hence $\norm{Sf}_p^p \leq M^p \norm{f}_p^p$ for all $f \in \Lp{G}$ which
implies that $S$ is a bounded operator from $\Lp{G}$ into $\Lp{G}$ and $\norm{S} \leq M$.

\item Suppose $p = \infty$. Let $f \in \Linfty{G}$. For every $v \in V$
  we have
\[
\abs{Sf(v)} = \sum_{u \sim v}\abs{f(u)} \leq M \norm{f}_\infty.
\]
Taking the supremum over all $v \in V$ we obtain $\norm{Sf}_\infty
\leq M \norm{f}_\infty$ and hence  $S$ is a bounded  operator from
$\Lp{G}$ into $\Lp{G}$ and $\norm{S} \leq M$.

\end{enumerate}

Now assume $G$ has unbounded degree.
\begin{enumerate}
\item Suppose $1 \leq p < \infty$. Since $G$ has unbounded degree,
  there exists a sequence of vertices $\{ v_n  \}$ such that
  $\deg(v_n) \geq 2^{np}$.

Let $f \in \calF$ be a function such that for all $v \in V$, 
\[
f(v)= \begin{cases}
        2^{-n}, & \text{ if }v=v_n,\\
        0, & \text{ otherwise. }
        \end{cases}
\]
Then 
\[
\norm{f}_p^p = \sum_{v\in V} |f(v)|^p = \sum_{n=1}^\infty
|2^{-n}|^p < \infty
\]
so $f \in \Lp{G}$. But
\[
\norm{Sf}^p_p  = \sum_{v \in V} \left(\sum_{u \sim v} f(u) \right)^p \geq
\sum_{v \in V} \sum_{u \sim v} (f(u))^p = \sum_{n=1}^\infty
\deg(v_n) (f(v_n))^p \geq \sum_{n=1}^n 2^{np}
\left(\frac{1}{2^n}\right)^p  = \infty.
\]
Hence $S$ is unbounded.

\item Now suppose $p= \infty$. Let $f \in \calF$ such that $f(v)=1$ for
  all $v \in V$. Then $\|f\|_\infty =1$, so $f \in \Linfty{G}$. But 
\[
\| S f \| = \sup_{v \in V}\{ | \sum_{u \sim v} f(u)| \} =
\sup_{v \in V} |\deg(v)|=\infty,
\]
since the degree of $G$ is unbounded. Hence $S$ is unbounded. \qedhere
\end{enumerate}
\end{proof}

In addition to the last theorem, observe (this had already been
noticed in \cite[Theorem 3.1]{vB}) that in the case $p=1$ and
$p=\infty$, we have $\norm{S}=\max\{ \deg(v) \, : \, v \in
V\}$. Indeed, let $v$ be a vertex with $k:=\deg(v)=\max\{ \deg(u) \, :
\, u \in V \}$. If $p=1$,  then $\norm{\chi_{v}}_1=1$ and $\norm{S
  \chi_{v}}_1= k$, which shows that $\norm{S}=k$. For $p=\infty$, let
$A=\{ u \in V \, : \, u \sim v \}$. Then $\norm{\chi_A}_\infty=1$ and
$\norm{S \chi_A}_\infty=k$, which shows that $\norm{S}=k$.

The natural question is whether the bound for the norm of the shift
found above is optimal also for the cases $1< p < \infty$. We now
give several definitions which allow us to introduce a class of
graphs for which the bound will be optimal.

\begin{defi}\label{def_euc}
Let $G=(V,E)$ be an infinite graph. 
\begin{itemize} 
\item We say $G$ is {\em almost $k$-regular} if
$\deg(v) \leq k$ for all $v \in V$ and there exists a finite set of
vertices $V_0$ such that $\deg(v)=k$ for all $v \in V \setminus V_0$.

\item For the following definition, recall that, for $n \in \N$,
  $\gamma(n)$ is defined to be the number of vertices at distance $n$
  from the distinguished vertex. We say $G$ is {\em
    $k$-almost-Euclidean} if the graph is almost $k$-regular,
    and if
\begin{equation}\label{kEuclidean1}
\lim_{n\to \infty} \frac{\gamma(n) + \gamma(n+1)}{\gamma(0)+\gamma(1)+\gamma(2)+\dots + \gamma(n)}=0.
\end{equation}
\end{itemize}
\end{defi}
  Let us make a few comments on this definition. First of all, observe
  that, since the graph is infinite, $\ds \lim_{n\to \infty} \gamma(0)
  + \gamma(1)+\gamma(2)+\dots + \gamma(n)=\infty$. Hence, for any $m
  \in \N_0$ we have
\begin{equation}\label{eq:limit}
\lim_{n\to \infty} \frac{\gamma(0) + \gamma(1)+\gamma(2)+\dots
  +\gamma(m)}{\gamma(0) + \gamma(1)+\gamma(2)+\dots + \gamma(n)}=0.
\end{equation}
Also, notice that if the expression \eqref{kEuclidean1} holds, we get
\begin{equation}\label{kEuclidean_n}
\lim_{n\to \infty}
\frac{\gamma(n)}{\gamma(0)+\gamma(1)+\gamma(2)+\dots + \gamma(n)}=0.
\end{equation}
and hence, for $0\leq m < n$ we obtain
\begin{equation}\label{kEuclidean2}
\lim_{n\to \infty} \frac{\gamma(m)+\gamma(m+1)+\dots +
  \gamma(n-1)}{\gamma(0)+\gamma(1)+\gamma(2)+\dots + \gamma(n)}=1.
\end{equation}

Observe that if $\gamma$ satisfies $\gamma(n+1) \geq \gamma(n)$ for
all $n$ greater than some $n_0 \in \N$ and 
\begin{equation}\label{kEuclidean3}
\lim_{n\to \infty}
\frac{\gamma(n+1)}{\gamma(0)+\gamma(1)+\gamma(2)+\dots +
  \gamma(n)}=0,
\end{equation}
then expression \eqref{kEuclidean1} is satisfied.
(Hence the name of the second part of Definition \ref{def_euc}, which is based on the definition in \cite{BiMoSh}.)

Note added: Professor L.~Golinskii has kinldy pointed to us that if there exists a
constant $c>0$ such that $\gamma$ satisfies $\gamma(n+1) \geq c
\gamma(n)$ for all $n$ greater than some $n_0 \in \N$ and expression
\eqref{kEuclidean3} is satisfied, then expression \eqref{kEuclidean_n} is
also satisfied. Hence, the limit \eqref{kEuclidean1} also holds.

We can now prove that for a certain class of graphs, the norm of the
shift equals the maximum degree of the graph.

\begin{theorem}\label{th:sharp_bound}
Let $G=(V,E)$ be a graph, let $1 \leq p < \infty$ and let $S:\Lp{G} \to
\Lp{G}$ be the shift operator. If $G$ is $k$-almost-Euclidean, then $\norm{S}=k$.
\end{theorem}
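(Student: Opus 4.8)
The plan is to show $\norm{S} = k$ by combining the upper bound we already have with a matching lower bound. By Theorem~\ref{th:bound}, since $G$ is almost $k$-regular we have $\norm{S} \leq \max\{\deg(v)\} = k$. So the entire task is to prove $\norm{S} \geq k$, and for this I would exhibit a sequence of test functions $f_n \in \Lp{G}$ with $\norm{f_n}_p = 1$ (or at least bounded) for which $\norm{S f_n}_p / \norm{f_n}_p \to k$. The natural candidates are normalized characteristic functions of balls: let $B_n = \{v \in V : |v| \leq n\}$ be the ball of radius $n$ about the root, and set $f_n = \chi_{B_n}$. Then $\norm{f_n}_p^p = |B_n| = \gamma(0)+\gamma(1)+\dots+\gamma(n)$, which is exactly the denominator appearing in the $k$-almost-Euclidean condition.

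The key computation is to estimate $\norm{S f_n}_p^p = \sum_{v \in V} \bigl(\sum_{u \sim v} \chi_{B_n}(u)\bigr)^p$. For a vertex $v$ lying \emph{well inside} the ball --- specifically with $|v| \leq n-1$ and also $\deg(v) = k$, so that all $k$ of its neighbors lie in $B_n$ --- the inner sum equals $k$, contributing $k^p$. The vertices where the inner sum is \emph{not} $k$ are those near the boundary: vertices with $|v| = n$ (some neighbors at distance $n+1$ lie outside) and vertices with $|v| = n+1$ (which have a neighbor at distance $n$ inside the ball). Let me organize this: writing $N_n := \gamma(0) + \dots + \gamma(n)$, the "good" interior vertices number at least $N_n - \gamma(n) - |V_0|$ (discarding the boundary sphere $\gamma(n)$ and the finite exceptional set $V_0$ where the degree might not be $k$), each contributing $k^p$. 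The "bad" boundary vertices --- those at distance $n$ or $n+1$ --- number at most $\gamma(n) + \gamma(n+1)$, and each contributes a nonnegative amount. Thus
\[
\norm{S f_n}_p^p \geq k^p \bigl(N_n - \gamma(n) - |V_0|\bigr).
\]
Dividing by $\norm{f_n}_p^p = N_n$ gives
\[
\frac{\norm{S f_n}_p^p}{\norm{f_n}_p^p} \geq k^p \left(1 - \frac{\gamma(n)}{N_n} - \frac{|V_0|}{N_n}\right).
\]
Now the Euclidean hypothesis enters: by \eqref{kEuclidean_n} the ratio $\gamma(n)/N_n \to 0$, and by \eqref{eq:limit} the ratio $|V_0|/N_n \to 0$ since $V_0$ is finite and $N_n \to \infty$. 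Hence the right-hand side tends to $k^p$, so $\norm{S f_n}_p / \norm{f_n}_p \to k$, giving $\norm{S} \geq k$ and therefore $\norm{S} = k$.

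I expect the main obstacle to be the careful bookkeeping of which vertices contribute the full $k^p$ and ensuring the correction terms are genuinely controlled by the hypotheses. Two subtleties deserve care. First, the exceptional set $V_0$ must be handled simultaneously with the boundary: an interior vertex contributes $k^p$ only if it is both non-exceptional (degree exactly $k$) \emph{and} all its neighbors lie in $B_n$, which requires $|v| \leq n-1$; isolating the vertices failing either condition and bounding their count by something $o(N_n)$ is the delicate part. Second, I should double-check that the condition $\gamma(n)/N_n \to 0$ derived as \eqref{kEuclidean_n} is indeed all that is needed for the lower bound, so that the stronger hypothesis \eqref{kEuclidean1} involving $\gamma(n)+\gamma(n+1)$ is what guarantees the boundary terms (at distances $n$ and $n+1$) are jointly negligible --- this is presumably why the definition is phrased with the $\gamma(n)+\gamma(n+1)$ numerator rather than just $\gamma(n)$. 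The remaining steps are routine applications of the limits \eqref{eq:limit} and \eqref{kEuclidean_n}.
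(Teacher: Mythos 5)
Your proposal is correct and follows essentially the same route as the paper: both take $f_n=\chi_{B_n}$, note that every non-exceptional vertex with $|v|\le n-1$ contributes exactly $k^p$ to $\norm{Sf_n}_p^p$, and kill the exceptional and boundary contributions using \eqref{eq:limit} and \eqref{kEuclidean_n}. The only (harmless) difference is that you keep just the one-sided estimate $\norm{Sf_n}_p^p\ge k^p(N_n-\gamma(n)-|V_0|)$, which suffices for $\norm{S}\ge k$, whereas the paper also bounds the boundary terms from above to compute the limit of $\norm{Sf_n}_p^p/\norm{f_n}_p^p$ exactly.
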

\begin{proof}
  Let $\o$ be the distinguished vertex of $G$.
Choose $m \in \N$ large enough such that if $|v|\geq m-1$, then
$\deg(v)=k$. For every $n \in \N$, $n > m$,  define $f_n:V \to \C$ as $f_n(v)=1$ if $|v|\leq n$ and $f_n(v)=0$
otherwise.

First, observe that if $m\leq |v|<n$ then $(Sf_n)(v)=k$, and if $|v|> n+1$ then $(Sf_n)(v)=0$. 

It then follows that, for $n > m$, 
\[
\norm{S f_n}_p^p = \sum_{v \in V} |(S f_n)(v) |^p = \sum_{|v|< m}
\abs{ (Sf_n)(v)}^p + \sum_{m\leq |v|<n} \abs{ (Sf_n)(v)}^p + \sum_{|v| =
  n} \abs{ (Sf_n)(v)}^p + \sum_{|v| = n+1} \abs{(S f_n)(v)}^p .
\]
We have
\[
\sum_{m\leq |v|<n} \abs{ (Sf_n)(v)}^p = \sum_{m\leq|v|<n} \abs{k}^p =
k^p (\gamma(m)+\gamma(m+1)+\dots + \gamma(n-1)).
\]
We also obtain the inequalities
\[
\sum_{|v| = n} \abs{ (Sf_n)(v)}^p + \sum_{|v| = n+1} \abs{(S
  f_n)(v)}^p \leq \sum_{|v| = n} \abs{k}^p + \sum_{|v| = n+1}
\abs{k}^p  = k^p (\gamma(n) + \gamma(n+1)).
\]
and 
\[
\sum_{|v| < m} \abs{ (Sf_n)(v)}^p   \leq \sum_{|v| < m}
\abs{k}^p = k^p (\gamma(0)+\gamma(1)+\gamma(2)+ \dots + \gamma(m-1)).
\]
Since $\norm{f_n}_p^p= \gamma(0)+\gamma(1)+\gamma(2)+ \dots +
  \gamma(n)$, by expression \eqref{kEuclidean2} we have
\[
\lim_{n\to \infty} \frac{\sum\limits_{m\leq |v|<n}
  \abs{(Sf_n)(v)}^p}{\norm{f_n}_p^p}= \lim_{n\to \infty} \frac{k^p
  (\gamma(m)+\gamma(m+1)+\dots +
  \gamma(n-1))}{\gamma(0)+\gamma(1)+\gamma(2)+ \dots + \gamma(n))} = k^p.
\]
Also, since the graph is $k$-almost-Euclidean by expression
  \eqref{eq:limit} we have
\[
0 \leq \lim_{n\to \infty} \frac{\sum\limits_{|v| \leq m} \abs{
    (Sf_n)(v)}^p}{\norm{f_n}_p^p} \leq
\lim_{n\to \infty} \frac{k^p
  (\gamma(0)+\gamma(1)+\gamma(2)+ \dots +
  \gamma(m))}{\gamma(0)+\gamma(1)+\gamma(2)+ \dots + \gamma(n)} = 0,
\]
Using the same argument, we have by the limit \eqref{kEuclidean1} that
\[
\lim_{n\to \infty} \frac{\sum\limits_{|v|=n} \abs{(Sf_n)(v)}^p +
  \sum\limits_{|v|=n+1} \abs{(Sf_n)(v)}^p}{\norm{f_n}_p^p}=0.
\]

Hence,
\[
\lim_{n\to \infty} \frac{\norm{S f_n}_p^p}{\norm{f_n}_p^p} = k^p,
\]
and therefore $\norm{S} \geq k$. By the previous theorem, we obtain
the equality.
\end{proof}

\begin{example}
    For every $d \in \N$, the lattice $\Z^d$ (where two points are adjacent if and only if their Euclidean distance in $\R^d$ is $1$) is $2d$-almost-Euclidean.
  \end{example}
  \begin{proof}
    The lattice $\Z^d$ is obviously $2d$-regular. Set the origin $\o$ as the distinguished vertex. Clearly $\gamma(0)=1$. The set of points at distance $1$ from $\o$ are the vertices of a $d$-dimensional octahedron (i.e., the $1$-skeleton of a cross-polytope or orthoplex; see, e.g., \cite[\S 7.2]{Cox}): hence, there are $2d$ of them. The set of points at distance $2$ from $\o$ are the vertices of a $d$-dimensional octahedron and the midpoints of its edges. Hence, since there are $2d(d-1)$ edges, we have $\gamma(2)=2d(d-1)+2d$. Analogously, the set of points at distance $3$ from $\o$ are the vertices of a $d$-dimensional octahedron and the points obtained by trisecting its edges: hence $\gamma(3)=2(2d(d-1))+2d$. In general,  the set of points at distance $n$ from $\o$ are the vertices of a $d$-dimensional octahedron and the points obtained by dividing its edges in $n$ equal parts. Hence we have
    \[
      \gamma(n)=(n-1)(2d(d-1))+2d.
    \]
    It is then straightforward to check that
    \[
      \lim_{n\to \infty} \frac{\gamma(n) + \gamma(n+1)}{\gamma(0)+\gamma(1)+\gamma(2)+\dots + \gamma(n)}=0.
    \]
    and hence the lattice $\Z^d$ is $2d$-almost-Euclidean.
    \end{proof}

\begin{example}
The triangular tesselation of the plane is $6$-almost-Euclidean and the hexagonal tesselation of
the plane is $3$-almost-Euclidean.
\end{example}
\begin{proof}
  The triangular tesselation of the plane is a $6$-regular graph and we have $\gamma(0)=1$ and $\gamma(n)=6n$. The hexagonal tesselation of the plane is a $3$-regular graph and we have $\gamma(0)=1$ and $\gamma(n)=3n$. The result follows.
\end{proof}

We should also point out that the graphs obtained by removing finitely many edges from the previous graphs (while keeping
them connected) are also $k$-almost-Euclidean.

Also, any almost $k$-regular graph for which $\gamma$ is bounded is $k$-almost-Euclidean. For example, the semi-infinite ladder graph is $3$-almost-Euclidean (thanks to Professor L.~Golinskii for pointing out this last example).

There is a class of graphs for which there is a smaller bound for the
norm: the trees. The following theorem is well-known for $p=2$, but the case $p\neq 2$
seems to have gone unnoticed. First recall that if $1 < p < \infty$
and $q:=\frac{p}{p-1}$, the duality between $\Lp{G}$ and
$\Lq{G}$ is realized by the pairing
\[
\left< f, g\right> =\sum_{v\in V} f(v) g(v),
\]
where $f \in \Lp{G}$ and $g \in \Lq{G}$.

Observe that the following result, if $k=2$, is included in Theorem~\ref{th:bound}. In fact, if $k=2$, equality follows from Theorem~\ref{th:sharp_bound} since in this case the tree is $2$-almost-Euclidean.

\begin{theorem}\label{th:bound_tree}
Let $T=(V,E)$ be a tree such that there exists $k\geq 2$ with $\deg(v) \leq k$ for every $v \in V$. Let $1<p< \infty$ and let
$q=\frac{p}{p-1}$. Then $\| S \| \leq  (k-1)^{1/p} + (k-1)^{1/q}$.
\end{theorem}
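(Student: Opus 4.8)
The plan is to resist the tempting additive decomposition $S=A+B$, where $(Af)(v):=\sum_{w\in\Chi(v)}f(w)$ sums $f$ over the children of $v$ and $(Bf)(v):=f(\Par(v))$ evaluates it at the parent, and to bound $\norm{A}+\norm{B}$ by the triangle inequality. This is seductive because, under the pairing above, $B$ is the adjoint of $A$, so $\norm{B}_{\Lp{G}}=\norm{A}_{\Lq{G}}$ and one would need only a single estimate in two exponents. However, it fails to give the stated constant: testing $B$ against $\chi_\o$ gives $\norm{B}_{\Lp{G}}\geq\deg(\o)^{1/p}$, which may be as large as $k^{1/p}>(k-1)^{1/p}$, so the root spoils the clean bound. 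Instead I would run a single weighted H\"older (Schur-type) estimate on $S$ itself, using a weight that decays away from the root so that the root's extra child is absorbed.

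Concretely, set $t:=(k-1)^{-1/(pq)}$ and $h(v):=t^{|v|}$, noting $t\leq 1$ since $k\geq 2$. The heart of the argument is the elementary claim that, with $A:=(k-1)^{1/p}+(k-1)^{1/q}$, for every vertex $x$ one has both
\[
\sum_{u\sim x}h(u)^q\leq A\,h(x)^q\qquad\text{and}\qquad\sum_{u\sim x}h(u)^p\leq A\,h(x)^p.
\]
To prove this, split the neighbors of $x$ into the parent (one vertex, at level $|x|-1$) and the children (at level $|x|+1$). For $x\neq\o$ there are at most $k-1$ children, so $\sum_{u\sim x}h(u)^q\leq h(x)^q\bigl(t^{-q}+(k-1)t^q\bigr)$; the choice of $t$ makes $t^{-q}=(k-1)^{1/p}$ and $(k-1)t^q=(k-1)^{1/q}$, whose sum is exactly $A$, and the $p$-power estimate is identical with $p$ and $q$ interchanged. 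For $x=\o$ there is no parent but possibly $k$ children, giving $\sum_{u\sim\o}h(u)^q\leq k\,t^q=(k-1)t^q+t^q$; since $t\leq 1$ forces $t^q\leq t^{-q}$, this is at most $(k-1)t^q+t^{-q}=A=A\,h(\o)^q$. Thus the root is absorbed \emph{precisely} by the decay $t\leq 1$.

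With the claim in hand I would estimate, for $f\in\Lp{G}$,
\[
\abs{(Sf)(x)}\leq\sum_{u\sim x}h(u)\,\frac{\abs{f(u)}}{h(u)}\leq\left(\sum_{u\sim x}h(u)^q\right)^{1/q}\left(\sum_{u\sim x}\frac{\abs{f(u)}^p}{h(u)^p}\right)^{1/p}
\]
by H\"older, whence $\abs{(Sf)(x)}^p\leq A^{p/q}h(x)^p\sum_{u\sim x}\abs{f(u)}^p/h(u)^p$. Summing over $x$, interchanging the (nonnegative) double sum, and applying the second inequality of the claim gives
\[
\norm{Sf}_p^p\leq A^{p/q}\sum_{u}\frac{\abs{f(u)}^p}{h(u)^p}\sum_{x\sim u}h(x)^p\leq A^{p/q+1}\norm{f}_p^p=A^p\,\norm{f}_p^p,
\]
using $p/q+1=p$, and therefore $\norm{S}\leq A=(k-1)^{1/p}+(k-1)^{1/q}$.

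The main obstacle is exactly the non-regularity at the root: because $\o$ has no parent it may carry $k$ children rather than $k-1$, and this is what breaks the naive additive (equivalently, duality-based) splitting and forces a genuinely global weighted argument. The two coincidences that make the weight work — that the single value $t=(k-1)^{-1/(pq)}$ makes both the $q$-power and the $p$-power neighbor sums equal to the \emph{same} constant $A$, and that $A^{p/q+1}=A^p$ — are the only points requiring care; everything else is H\"older and Tonelli. I would also note that when $k=2$ the weight is constant and the estimate degenerates to $\norm{S}\leq 2$, consistent with Theorem~\ref{th:bound}.
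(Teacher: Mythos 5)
Your proof is correct, but it takes a genuinely different route from the paper's. The paper argues by duality: it bounds $\abs{\left<Sf,g\right>}$ for $\norm{f}_p,\norm{g}_q\leq 1$ by splitting each edge's contribution into a child-sum term and a parent term, applying Young's inequality with the weights $(k-1)^{\pm p/(p+q)}$ and $(k-1)^{\pm q/(p+q)}$, and then counting children ($\leq k-1$ for non-root vertices, $\leq k$ for the root); the excess root contributions appear as boundary terms $\abs{f(\o)}^p$, $\abs{g(\o)}^q$ that enter with a favorable sign and can be discarded. Your weighted Schur test with $h(v)=t^{\abs{v}}$, $t=(k-1)^{-1/(pq)}$, is really the same optimization in disguise --- note $p/(p+q)=1/q$, so the paper's Young weights are exactly your $t^{-p}$, $t^{-q}$ --- but packaged so that the root anomaly is absorbed once and for all by the monotonicity $t\leq 1$ rather than by sign bookkeeping of boundary terms. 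What your version buys is a shorter, duality-free argument that stays entirely in $\Lp{T}$ and isolates the two identities that make the constant work ($t^{-q}+(k-1)t^{q}=A$ and $A^{p/q+1}=A^{p}$); what the paper's version buys is that the bilinear form makes visible why the same bound governs both $\Lp{T}$ and $\Lq{T}$ simultaneously, which is thematically consistent with its use of the pairing elsewhere. All the steps in your argument check out: the claim at the root uses only $\deg(\o)\leq k$ and $t^{q}\leq t^{-q}$, the interchange of sums is Tonelli for nonnegative terms using the symmetry of $\sim$, and $h$ never vanishes, so the division by $h(u)$ is harmless.
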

\begin{proof}
Let $\o$ be the distinguished vertex of $T$; i.e., the root of $T$.
Let $f \in \Lp{T}$ and $g\in \Lq{T}$ with $\|f \|_p\leq 1$ and $\|g
\|_q \leq 1$. Since $T$ is a tree we can write
\begin{align*}
\left< S f, g \right> 
&= \sum_{v\in V} \left(\sum_{w \sim v} f(w)\right) g(v) \\
&= \sum_{v\in V} \left(\sum_{w \in \Chi(v)} f(w)\right) g(v) +
 \sum_{v\in V\setminus\{\o\}} f(\Par(v)) g(v) \\
&= \sum_{v\in V} \left(\sum_{w \in \Chi(v)} f(w)\right) g(v) + \sum_{v\in
       V} f(v) \left(\sum_{w \in \Chi(v)} g(w) \right).
\end{align*}
Applying the triangle inequality and then Young's inequality to each summand we obtain
\begin{equation}\label{eq:Sfg}
  \begin{split}
|\left< S f, g \right>| 
\leq {} & \sum_{v\in V} \sum_{w \in \Chi(v)} |f(w)| |g(v)| + \sum_{v\in
       V} \sum_{w \in \Chi(v)} |f(v)| |g(w)| \\
\leq {} &  \sum_{v\in V} \sum_{w \in \Chi(v)} \left( \tfrac{1}{p} | f(w)|^p
         (k-1)^{p/(p+q)} + \tfrac{1}{q} |g(v)|^q (k-1)^{-q/(p+q)} \right) \\
& + \sum_{v\in V} \sum_{w \in \Chi(v)} \left( \tfrac{1}{p} | f(v)|^p (k-1)^{-p/(p+q)} + \tfrac{1}{q} |g(w)|^q (k-1)^{q/(p+q)} \right) \\
  = {} &  \frac{(k-1)^{p/(p+q)}}{p} \sum_{v\in V} \sum_{w \in \Chi(v)} | f(w)|^p +  \frac{(k-1)^{-q/(p+q)}}{q} \sum_{v\in V} \sum_{w \in \Chi(v)}  |g(v)|^q \\
& + \frac{(k-1)^{-p/(p+q)}}{p} \sum_{v\in V} \sum_{w \in \Chi(v)} |f(v)|^p  +  \frac{(k-1)^{q/(p+q)}}{q} \sum_{v\in V} \sum_{w \in \Chi(v)}  |g(w)|^q.
\end{split}
\end{equation}

Since the root has at most $k$ children and every other vertex has at most $k-1$ children, we have
\[
  \sum_{v\in V} \sum_{w \in \Chi(v)}  |g(v)|^q \leq (k-1) \sum_{v \in V\setminus\{\o\}} |g(v)|^q + k |g(\o)|^q
  = (k-1) \sum_{v \in V} |g(v)|^q + |g(\o)|^q = (k-1) \| g\|_q^q + |g(\o)|^q 
\]
and
\[
  \sum_{v\in V} \sum_{w \in \Chi(v)}  |f(v)|^p \leq (k-1) \sum_{v \in V\setminus\{\o\}} |f(v)|^p + k |f(\o)|^p = (k-1) \sum_{v \in V} |f(v)|^p + |f(\o)|^p = (k-1) \| f\|_p^p + |f(\o)|^p;
\]
also,
\[
  \sum_{v\in V} \sum_{w \in \Chi(v)} | f(w)|^p = \| f \|_p^p - |f(\o)|^p \quad\text{ and }\quad \sum_{v\in V} \sum_{w \in \Chi(v)} |g(w)|^q = \| g \|_q^q - |f(\o)|^q.
\]

Substituting the previous expressions into inequality \eqref{eq:Sfg} we obtain
\begin{align*}
|\left< S f, g \right> | 
 \leq {} & \frac{(k-1)^{p/(p+q)}}{p} \left( \|f\|_p^p -| f(\o)|^p \right) + \frac{(k-1)^{-q/(p+q)}}{q}  \left( (k-1) \| g \|_q^q + |g(\o)|^q \right) \\
 & + \frac{(k-1)^{-p/(p+q)}}{p} \left( (k-1) \| f\|_p^p + |f(\o)|^p \right)
+  \frac{(k-1)^{q/(p+q)}}{q} \left( \| g\|_q^q - |g(\o)|^q \right) \\
  = {} & \left( \frac{(k-1)^{p/(p+q)}}{p}  + \frac{(k-1)^{1-p/(p+q)}}{p}  \right) \| f \|_p^p + \left( \frac{(k-1)^{q/(p+q)}}{p}  + \frac{(k-1)^{1-q/(p+q)}}{p}  \right) \| g \|_q^q \\
         & - \left( \frac{(k-1)^{p/(p+q)}}{p} - \frac{(k-1)^{-p/(p+q)}}{p} \right) |f(\o)|^p
           - \left( \frac{(k-1)^{q/(p+q)}}{p} - \frac{(k-1)^{-q/(p+q)}}{q} \right) |g(\o)|^p \\
  \leq {} & \left( \frac{(k-1)^{p/(p+q)}}{p}  + \frac{(k-1)^{q/(p+q)}}{p}  \right) \| f \|_p^p + \left( \frac{(k-1)^{q/(p+q)}}{p}  + \frac{(k-1)^{p/(p+q)}}{p}  \right) \| g \|_q^q,
\end{align*}
where we used the fact that $1-\frac{q}{p+q}=\frac{p}{p+q}$ and $1-\frac{p}{p+q}=\frac{q}{p+q}$ and that
\[
  \frac{(k-1)^{p/(p+q)}}{p} - \frac{(k-1)^{-p/(p+q)}}{p} \geq 0 \quad \text{ and } \quad  \frac{(k-1)^{q/(p+q)}}{p} - \frac{(k-1)^{-q/(p+q)}}{q} \geq 0. 
\]

Recalling that $\frac{p}{p+q}=\frac{1}{q}$ and $\frac{q}{p+q}=\frac{1}{p}$ we obtain
\[
|\left< S f, g \right> | 
\leq  \frac{(k-1)^{1/q}}{p}  +  \frac{(k-1)^{1/p}}{p}  +
\frac{(k-1)^{1/p}}{a}  +  \frac{(k-1)^{1/q}}{q}
= (k-1)^{1/q}  + (k-1)^{1/p},
\]
since $\|f\|_p \leq 1$ and $\|g\|_q\leq 1$.

Since $\| S \| = \sup\{ |\left< S f, g \right>| \, : \, \|f \|_p\leq
1, \|g \|_q\leq 1 \}$ (see, for example, Proposition 1.10.11 in
\cite{Megginson}), we obtain the desired result.
\end{proof}

In some cases, the bound for the norm above is attained.

\begin{theorem}\label{th:sharp_bound_tree}
Let $T=(V,E)$ be an almost $k$-regular rooted tree. Let $1<p< \infty$ and let
$q=\frac{p}{p-1}$. Then $\| S \| =   (k-1)^{1/p} + (k-1)^{1/q}$.
\end{theorem}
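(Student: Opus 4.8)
The upper bound $\norm{S}\le (k-1)^{1/p}+(k-1)^{1/q}$ is already supplied by Theorem~\ref{th:bound_tree}, so the plan is to prove only the reverse inequality $\norm{S}\ge (k-1)^{1/p}+(k-1)^{1/q}$. Since $\norm{S}=\sup\{\,\abs{\langle Sf,g\rangle}:\norm{f}_p\le 1,\ \norm{g}_q\le 1\,\}$, it is enough to produce, for each large $N$, nonnegative finitely supported test functions $f_N\in\Lp{T}$ and $g_N\in\Lq{T}$ whose quotient $\langle Sf_N,g_N\rangle/(\norm{f_N}_p\norm{g_N}_q)$ tends to $(k-1)^{1/p}+(k-1)^{1/q}$ as $N\to\infty$. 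Writing $m:=k-1$, I would take the \emph{radial} functions $f_N(v)=m^{-\abs{v}/p}$ and $g_N(v)=m^{-\abs{v}/q}$ for $\abs{v}\le N$, and $0$ otherwise. The choice of the decay exponents is dictated by the upper-bound argument: these rates are precisely the ones that turn every application of Young's inequality in \eqref{eq:Sfg} into an equality (for a child $w$ of $v$ one checks $m\,\abs{f_N(w)}^p=\abs{g_N(v)}^q$ and $\abs{f_N(v)}^p=m\,\abs{g_N(w)}^q$ on the regular part of the tree), which is the conceptual reason the bound should be saturated.

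The computation I would then carry out rests on controlling the radial growth. Because $T$ is almost $k$-regular, the exceptional set $V_0$ is finite, so there is an $R$ such that every vertex with $\abs{v}\ge R$ has degree exactly $k$; consequently $\gamma(n+1)=m\,\gamma(n)$ for $n\ge R$, giving $\gamma(n)=c\,m^{n}$ with $c:=\gamma(R)m^{-R}>0$. For the norms this yields $\norm{f_N}_p^p=\sum_{n=0}^{N}\gamma(n)m^{-n}=cN+O(1)$, since each shell with $n\ge R$ contributes exactly $\gamma(n)m^{-n}=c$; likewise $\norm{g_N}_q^q=cN+O(1)$, whence $\norm{f_N}_p\norm{g_N}_q=(cN)^{1/p+1/q}(1+o(1))=cN(1+o(1))$. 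For the pairing, note that only vertices with $\abs{v}\le N$ contribute (as $g_N$ vanishes elsewhere), and I would split the sum into the bulk $R\le\abs{v}\le N-1$, the inner region $\abs{v}<R$, and the single outer shell $\abs{v}=N$. A bulk vertex $v$ with $\abs{v}=n$ is regular, so $(Sf_N)(v)=f_N(\Par(v))+\sum_{w\in\Chi(v)}f_N(w)=m^{-(n-1)/p}+m\,m^{-(n+1)/p}$, and multiplying by $g_N(v)=m^{-n/q}$ and by the $\gamma(n)=cm^{n}$ vertices in the shell collapses, using $\tfrac1p+\tfrac1q=1$, to the $n$-independent value $c\,(m^{1/p}+m^{1/q})$; summing over the $\approx N$ bulk shells gives $\langle Sf_N,g_N\rangle=c\,(m^{1/p}+m^{1/q})\,N+O(1)$.

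Finally I would dispatch the error terms. The inner region $\abs{v}<R$ contains a fixed finite number of vertices, on each of which $\abs{(Sf_N)(v)}\le k$ and $\abs{g_N(v)}\le 1$, so its total contribution is $O(1)$; the outer shell $\abs{v}=N$ contributes $\gamma(N)\,m^{-(N-1)/p}m^{-N/q}=c\,m^{1/p}=O(1)$ by the same exponent arithmetic. Dividing, the $O(1)$ pieces are swamped by the linear-in-$N$ leading terms, and the quotient tends to $(m^{1/p}+m^{1/q})=(k-1)^{1/p}+(k-1)^{1/q}$, establishing the lower bound and hence, with Theorem~\ref{th:bound_tree}, the claimed equality. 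The main obstacle, and the part demanding care, is the bookkeeping of the finitely many irregular vertices near the root together with the boundary shell: one must confirm both that the bulk shells give an exactly constant contribution (this is where $\tfrac1p+\tfrac1q=1$ is used repeatedly) and that these corrections are genuinely of lower order, so that the limit of the Rayleigh-type quotient is unaffected.
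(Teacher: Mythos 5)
Your proposal is correct, and it takes a genuinely different route from the paper's. The paper works with a single family of test functions $f_n(v)=(k-1)^{-|v|/p}$ supported on the annulus $N<|v|\le n$ (so that the finitely many irregular vertices are excluded from the support altogether) and computes $\|Sf_n\|_p^p$ directly, shell by shell, showing the Rayleigh quotient $\|Sf_n\|_p^p/\|f_n\|_p^p$ tends to $\bigl((k-1)^{1/p}+(k-1)^{1/q}\bigr)^p$. You instead exploit the duality $\|S\|=\sup|\langle Sf,g\rangle|$ already used in the upper-bound proof, pairing the same radial $L^p$ test function with a dual $L^q$ test function $g_N(v)=(k-1)^{-|v|/q}$ whose decay rate is tuned to force equality in every application of Young's inequality in \eqref{eq:Sfg}; your bookkeeping of the bulk shells, the finitely many irregular vertices near the root, and the boundary shell $|v|=N$ is sound (the shell recursion $\gamma(n+1)=(k-1)\gamma(n)$ for $n\ge R$ gives exactly the constant-per-shell contributions you claim, and the error terms are indeed $O(1)$ against a leading term of order $N$). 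What your approach buys is a conceptual explanation of \emph{why} the bound of Theorem~\ref{th:bound_tree} is saturated --- you exhibit the extremizing pair for the bilinear form rather than just a norming sequence --- at the modest cost of carrying two functions and two exponents through the computation; the paper's direct computation is more self-contained but offers no such insight. Two small points to tidy up: take $R\ge 1$ so that every bulk vertex has a parent and exactly $k-1$ children, and note that since $f_N,g_N\ge 0$ the pairing is already nonnegative, so $\langle Sf_N,g_N\rangle\le\|S\|\,\|f_N\|_p\|g_N\|_q$ follows from H\"older's inequality alone without invoking the full duality statement.
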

\begin{proof}
Since $T$ is almost $k$-regular, there exists $N \in \N$ such that
$\deg(v)=k$ if $|v|\geq N$. For each $n > N$ we define the function
\[
f_n(v)= \begin{cases}
(k-1)^{-\frac{|v|}{p}}, & \text{ if } N < |v| \leq n, \\
0, & \text{ otherwise.}
\end{cases}
\]
Clearly $f_n \in \Lp{G}$ for each $n \in \N$, $n>N$.  

Let $C=\gamma(N)$. It then follows that for each $j \geq N$, there are $C (k-1)^{j-N}$ vertices
$v$ with $|v|=j$. With this in mind, it follows that
\[
\| f_n \|_p^p = \sum_{j=N+1}^{n} C (k-1)^{j-N} (k-1)^{-j} = (n-N) C (k-1)^{-N}.
\]
We also have that
\begin{align*}
\| S f_n \|_p^p 
= {} & C \abs{ 1\cdot 0 + (k-1) (k-1)^{-\frac{N+1}{p}} }^p + C (k-1) \abs{
    1 \cdot 0 + (k-1) (k-1)^{-\frac{N+2}{p}}}^p \\
 & + \sum_{j=N+2}^{n-1} C (k-1)^{j-N} \abs{(k-1)^{-\frac{j-1}{p}} + (k-1)
    (k-1)^{-\frac{j+1}{p}} }^p \\
 & + C (k-1)^{n-N} \abs{ (k-1)^{-\frac{n-1}{p}} + (k-1) 0 }^p  + C (k-1)^{n+1-N} \abs{(k-1)^{-\frac{n}{p}} + (k-1) 0 }^p.
\end{align*}
Simplifying yields
\begin{align*}
\| S f_n \|_p^p 
={}& C \bigg( (k-1)^{p-(N+1)} + (k-1)^{1+ p-(N+2)} \\
&  + \sum_{j=N+2}^{n-1} (k-1)^{-N }\abs{(k-1)^{\frac{j}{p}-\frac{j-1}{p}}
    + (k-1) (k-1)^{\frac{j}{p} -\frac{j+1}{p}} }^p   \\
&  +  (k-1)^{-N+1} + (k-1)^{-N+1} \bigg) \\
={} & C (k-1)^{-N}\bigg( 2 (k-1)^{p-1}  
     +  \sum_{j=N+2}^{n-1} \abs{(k-1)^{\frac{1}{p}} +  (k-1)^{1-\frac{1}{p}} }^p  + 2 (k-1) \bigg) \\
={} & C (k-1)^{-N} \bigg( 2 (k-1)^{p-1} 
+  ((n-1)-(N+1))
 \abs{(k-1)^{\frac{1}{p}} +  (k-1)^{1-\frac{1}{p}} }^p  +  2 (k-1) \bigg).
\end{align*}
From this, it follows that
\[
\frac{\| S f_n \|_p^p}{\| f_n \|_p^p}= \frac{
 2 (k-1)^{p-1}  +  (n-N-2) \abs{(k-1)^{\frac{1}{p}} +  (k-1)^{1-\frac{1}{p}} }^p  +  2 (k-1)
}{n-N}.
\]
Therefore
\[
\lim_{n \to \infty} \frac{\| S f_n \|_p^p}{\| f_n \|_p^p} = \abs{(k-1)^{\frac{1}{p}} +  (k-1)^{1-\frac{1}{p}} }^p,
\]
and hence
\[
\| S \| \geq
(k-1)^{\frac{1}{p}} +  (k-1)^{\frac{1}{q}}.
\]
By the previous theorem, it follows that
\[
\| S \| =  (k-1)^{\frac{1}{p}} +  (k-1)^{\frac{1}{q}}
\]
as desired.
\end{proof}


\section{Kernel of trees}

In this section, we try to answer the question of when the kernel of
the shift is trivial or nontrivial in the case of leafless rooted trees. This question seems to have been overlooked in the literature and it yields some interesting results. Some results for a modification of the shift can be found in \cite{CoMa}.

First, we need some definitions.

\begin{defi}
Let $T=(V,E)$ be a rooted tree and let $v \in V$. We define $\beta(v)$ to be
the number of children of $v$.
\end{defi}

Observe that $\beta(v)=\deg(v)-1$ for every vertex $v\neq \o$ and $\beta(\o)=\deg(\o)$. Also,
$\beta(v)=0$ if and only if $v$ is a leaf.

\begin{defi}
Let $T=(V,E)$ be a rooted leafless tree with bounded degree. Consider
the set $\calM$ defined as
\[
\calM:= \{ v \in V \, : \, \beta(v)=\beta(w)  \text{ for infinitely
  many } w \in V \}.
\]
Since the tree has bounded degree, the set $\{\beta(v) \, : \, v \in V\}$ is finite and hence the set $V\setminus \calM$ is finite too. We define the numbers
\[
M:= \max_{v \in \calM}\{\beta(v)\}, \quad \text{ and } \quad m:=
\min_{v \in \calM}\{\beta(v)\}.
\]
Observe that $m=0$ if and only if $T$ has infinitely many leaves.
\end{defi}

For the rest of this section, we will only consider leafless trees. We will say a few words at the end of this section about the type of behavior that may occur if the tree has leaves.

We will need the following key observation. If $T=(V,E)$ is a leafless
tree and if $f \neq 0$ satisfies $Sf=0$, then there must exist $v^* \in V$
such that $f(v^*) \neq 0$. Let $w^* \in \Chi(v^*)$ (the set $\Chi(v^*)$ is nonempty, since $T$ is leafless). Since
$(Sf)(w^*)=0$, we must have 
\[
0=(Sf)(w^*)=f(v^*) + \sum_{u \in \Chi(w^*)} f(u),
\]
and hence there exists $u^* \in \Chi^2(v^*)$ with $f(u^*)\neq 0$. That
is, if $f \in \ker(S)$ and $f$ is not zero at a vertex, then there is
a grandchild of the vertex in which $f$ does not vanish. Applying this
argument inductively, it is clear that for all $n \in \N$ there exists $u_n
\in \Chi^{2n}(v^*)$ with $f(u_n) \neq 0$.

We are now ready to prove the following theorem.

\begin{theorem}\label{th:p_implies_trivial}
Let $T=(V,E)$ be a leafless tree with bounded degree and consider the
shift $S$ on $\Lp{T}$, with $1 \leq p  \leq \log_M(m)+1$ if $M>1$, or $1\leq p < \infty$ if $M=1$. Then
$\ker(S)$ is trivial.
\end{theorem}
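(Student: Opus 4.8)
The plan is to argue by contradiction: suppose $f \in \ker(S)$ with $f \neq 0$, and deduce that $\|f\|_p = \infty$, contradicting $f \in \Lp{T}$. The engine is the identity recorded just before the statement: for every non-root vertex $w$ with parent $v=\Par(w)$, the equation $(Sf)(w)=0$ reads
\[
f(v) = -\sum_{u \in \Chi(w)} f(u).
\]

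First I would isolate a good starting vertex. Since $\{\beta(v) : v \in V\}$ is finite, any value exceeding $M$ or lying below $m$ is attained only finitely often, so there is a finite set $V_0$ with $m \leq \beta(v) \leq M$ for every $v \notin V_0$. Using the key observation, from a vertex $v^*$ where $f$ does not vanish one produces a vertex in $\Chi^{2n}(v^*)$ on which $f \neq 0$ for each $n$; choosing $n$ large enough that $|v^*|+2n$ exceeds $\max\{|v| : v \in V_0\}$, I obtain a vertex $\tilde v$ with $f(\tilde v)\neq 0$ such that $\tilde v$ and all of its descendants lie outside $V_0$.

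The core step is a local estimate. Fix a vertex $v$ outside $V_0$. For each child $w \in \Chi(v)$, the displayed identity and the triangle inequality give $|f(v)| \leq \sum_{u \in \Chi(w)} |f(u)|$, and the Jensen inequality from the Preliminaries, applied to the $\beta(w)$ summands with $\beta(w)\leq M$, yields
\[
|f(v)|^p \leq \beta(w)^{p-1}\sum_{u \in \Chi(w)} |f(u)|^p \leq M^{p-1}\sum_{u \in \Chi(w)} |f(u)|^p.
\]
Summing over the $\beta(v)\geq m$ children $w$ of $v$ and using that $\Chi^2(v)$ is the disjoint union of the sets $\Chi(w)$, I obtain
\[
\sum_{u \in \Chi^2(v)} |f(u)|^p \geq \frac{m}{M^{p-1}}\,|f(v)|^p = K\,|f(v)|^p, \qquad K := \frac{m}{M^{p-1}}.
\]
The hypothesis on $p$ is designed precisely so that $K \geq 1$: when $M>1$, the inequality $p \leq \log_M(m)+1$ is equivalent to $M^{p-1}\leq m$, and when $M=1$ one has $m=M=1$, so $K=1$ for every $p$.

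Finally I would iterate over even generations. Writing $S_n := \sum_{u \in \Chi^{2n}(\tilde v)} |f(u)|^p$ and summing the local estimate over all $v \in \Chi^{2n}(\tilde v)$, which all lie outside $V_0$, the decomposition $\Chi^{2(n+1)}(\tilde v) = \bigcup_{v \in \Chi^{2n}(\tilde v)} \Chi^2(v)$ into disjoint pieces gives $S_{n+1} \geq K\,S_n$, hence $S_n \geq K^n S_0 \geq S_0 = |f(\tilde v)|^p > 0$ because $K\geq 1$. Since the generations $\Chi^{2n}(\tilde v)$ are pairwise disjoint subsets of $V$,
\[
\|f\|_p^p \geq \sum_{n=0}^{\infty} S_n \geq \sum_{n=0}^{\infty} |f(\tilde v)|^p = \infty,
\]
contradicting $f \in \Lp{T}$. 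I expect the only delicate point to be the bookkeeping around the finite exceptional set $V_0$, namely ensuring the starting vertex $\tilde v$ is deep enough that every vertex touched by the iteration satisfies $m \leq \beta \leq M$; the arithmetic translating $p \leq \log_M(m)+1$ into $K \geq 1$ is the conceptual heart of the argument.
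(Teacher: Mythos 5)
Your proposal is correct and follows essentially the same route as the paper's own proof: both use the grandchild observation to locate a sufficiently deep nonvanishing vertex, the same Jensen-based local estimate $\sum_{u\in\Chi^2(v)}|f(u)|^p\geq \frac{m}{M^{p-1}}|f(v)|^p$ on vertices of $\calM$, and the divergence of the geometric-type sum over even generations when $\frac{m}{M^{p-1}}\geq 1$. The only cosmetic difference is that you relocate the base point to a deep vertex $\tilde v$, whereas the paper keeps $v^*$ and starts the induction at generation $2N$.
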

\begin{proof}
Assume there exists $f\in\ker(S)$ such that $f\neq0$. Then there
exists $v^* \in V$ such that $f(v^*)\neq 0$.

Since $T$ has bounded degree, there exists $N \in \N$ such that for
all $|v|\geq 2N$, we have $v \in \calM$. We claim that
\begin{equation}\label{eq:treesum_trivial}
C \left(\frac{m}{M^{p-1}}\right)^k \leq \sum_{u \in
    \Chi^{2k}(v^*)} \abs{f(u)}^p
\end{equation}
for all $k \in \N$ with $k \geq N$, where
\[
C:= \left(\frac{m}{M^{p-1}}\right)^{-N} \sum_{u \in
    \Chi^{2N}(v^*)} \abs{f(u)}^p.
\]

To prove the claim, observe first that, since $f(v^*) \neq 0$, the observation before this
theorem implies that there exists $u \in \Chi^{2N}(v^*)$ such that
$f(u)\neq 0$. Hence, $C$ is positive.

We proceed by induction on $k$. Observe that for $k=N$ the inequality
\eqref{eq:treesum_trivial} is trivially satisfied. Now, assume that
for a fixed $k \in \N$, with $k\geq N$ the inequality
\eqref{eq:treesum_trivial} is satisfied.

Let $w^* \in \Chi^{2k}(v^*)$ and let $v_0$ be a child of $w^*$. Then,
since $(Sf)(v_0)=0$ we have
\[
0=f(w^*)+{\sum_{u \in \Chi(v_0)}f(u)}.
\]
It then follows that
\[
\abs{f(w^*)} \leq \sum_{u \in \Chi(v_0)}\abs{f(u)},
\]
and therefore, by Jensen's inequality and since $\beta(v_0) \leq M$, we have
\[
\abs{f(w^*)}^p \leq \beta(v_0)^{p-1} \sum_{u \in \Chi(v_0)}\abs{f(u)}^p
\leq M^{p-1} \sum_{u \in \Chi(v_0)}\abs{f(u)}^p.
\]
The above inequality holds for all $v_0 \in \Chi(w^*)$. We know $w^*$ has at least $m$ children, so 
\[ 
m \abs{f(w^*)}^p \leq \sum_{v_0 \in \Chi(w^*)} \abs{f(w^*)}^p \leq
M^{p-1} \sum_{v_0 \in \Chi(w^*)} \sum_{u\in \Chi(v_0)} \abs{f(u)}^p =
M^{p-1} \sum_{u\in \Chi^2(w^*)} |f(u)|^p,
\]
and therefore
\[ 
\frac{m}{M^{p-1}} \abs{f(w^*)}^p \leq  \sum_{u\in \Chi^2(w^*)} |f(u)|^p.
\]

The above holds for all $w^*\in \Chi^{2k}(v^*)$. By the induction hypothesis we have
\begin{align*}  
C \left(\frac{m}{M^{p-1}} \right)^{k+1} 
&\leq \frac{m}{M^{p-1}} \sum_{w^*\in \Chi^{2k}(v^*)}\abs{f(w^*)}^p \\
&= \sum_{w^*\in \Chi^{2k}(v^*)}  \frac{m}{M^{p-1}}|f(w^*)|^p \\
&\leq \sum_{w^* \in \Chi^{2k}(v^*)}\sum_{u \in \Chi^{2} (w^*)} \abs{f(u)}^p\\
&= \sum_{u \in \Chi^{2(k+1)}(v^*)} \abs{f(u)}^p.
\end{align*}
Thus, our claim is true by induction. 

To finish the proof, observe that
\begin{align*}
   \sum_{w \in V}\abs{f(w)}^p &\geq \sum_{k=N}^\infty \sum_{u\in \Chi^{2k}(v^*)} |f(u)|^p \\
   &\geq C \sum_{k=N}^\infty  \left(\frac{m}{M^{p-1}} \right)^{k}.
\end{align*}
Since $\frac{m}{M^{p-1}}\geq 1$ by hypothesis, the above series diverges,
which is a contradiction, since $f \in \Lp{T}$. Hence $\ker(S)=\{0\}$
for $1\leq p \leq 1 + \log_M(m)$, if $M>1$ and for $1 \leq p < \infty$, if $M=1$.
\end{proof}

The following example shows that the bound for $p$ in the theorem above cannot be improved.

\begin{example}\label{ex:p_implies_trivial}
Let $M\geq m$ be natural numbers and construct a tree $T$ as
follows. Let $\o$ be the root and add $m$ children to it. To each of
these new vertices add $M$ children. To each of the latter vertices
add $m$ children, and to each of those vertices add $M$
children. Continue in this manner.  
Consider the shift $S$ on
$\Lp{T}$, with $\log_M(m)+1<p \leq \infty$ if $M>1$ or $p=\infty$ if
$M=1$. Then $\ker(S)$ is nontrivial.

\begin{figure}[h]
\newcommand\vrad{0.8mm}
\newcommand\ethick{0.1pt}

\tikzset{decorate sep/.style 2 args=
{decorate,decoration={shape backgrounds,shape=circle,shape size=#1,shape sep=#2}}}

\newcommand{\mpartbig}[2]{
\draw [fill] (#1,#2) circle [radius=\vrad]; 
\foreach \x in {-3,-2,-1}{
\draw [line width = \ethick] (#1,#2) -- (#1+\x,#2-3);
\draw [fill] (#1+\x,#2-3) circle [radius=\vrad];
}
\draw [line width = \ethick] (#1,#2) -- (#1+3,#2-3);
\draw [fill] (#1+3,#2-3) circle [radius=\vrad];
\draw [dashed] (#1-0.5,#2-2.5)--(#1+2,#2-2.5) node [midway,below] {$M$};
}
\newcommand{\mpartlittle}[2]{
\draw [fill] (#1,#2) circle [radius=\vrad]; 
\foreach \x in {-3,-2}{
\draw [line width = \ethick] (#1,#2) -- (#1+\x,#2-3);
\draw [fill] (#1+\x,#2-3) circle [radius=\vrad];
}
\draw [line width = \ethick] (#1,#2) -- (#1+2,#2-3);
\draw [fill] (#1+2,#2-3) circle [radius=\vrad];
\draw [dashed] (#1-1,#2-2.5)--(#1+1,#2-2.5) node [midway,below] {$m$};
}
\begin{tikzpicture}[scale=0.7]
\node [above] at (0,0) {$\o$};
\mpartlittle{0}{0}
\mpartbig{-3}{-3}
\mpartlittle{0}{-6}
\end{tikzpicture}
\caption{Constructing $T$ in Example~\ref{ex:p_implies_trivial}.}
\end{figure}

\end{example}
\begin{proof}
Define $f: T \to \C$ as follows:
\[
f(v)=\begin{cases}
0, & \text{ if } |v| \text{ is odd,}\\
(- M)^{-|v|/2}, & \text{ if } |v| \text{ is even.}
\end{cases}
\]
We first verify that $Sf=0$. Indeed, if $|v|$ is even, then if $w$ is
adjacent to $v$, we have $f(w)=0$ and hence $(Sf)(v)=0$. Now, if
$|v|=2k+1$ is odd, then $f(\Par(v))=(-M)^{-k}$ and, if $w \in \Chi(v)$
then $f(w)=(-M)^{-k-1}$. Hence, 
\[
(Sf)(v)= f(\Par(v))+\sum_{w \in \Chi(v)} f(w) = (-M)^{-k} + \sum_{w
  \in \Chi(v)} (-M)^{-k-1} = (-M)^{-k} + M (-M)^{-k-1}=0.
\]
It is clear that $f$ is bounded, which settles the case $p=\infty$.

Now, if $M>1$, let $\log_M(m)+1< p < \infty$. We will show that $f \in
\Lp{T}$. Observe that, for each $k \in \N_0$, there are exactly $(M
m)^k$ vertices $v$ with $|v|=2k$. We then have
\[
\sum_{v \in V} |f(v)|^p = \sum_{|v| \text{ even}} |f(v)|^p =
\sum_{k=0}^\infty \sum_{|v|=2k}  |f(v)|^p = \sum_{k=0}^\infty
\sum_{|v|=2k}  |M|^{-kp} =  \sum_{k=0}^\infty (M m)^k  |M|^{-kp} =
\sum_{k=0}^\infty \left( \frac{m}{M^{p-1}} \right)^k,
\]
which converges since $p-1>\log_M (m)$. Hence $ f \in \Lp{T}$ and
$\ker S$ is nontrivial.
\end{proof}

The idea in the construction of the nontrivial function in the example
above is used below to give a more general theorem.


\begin{theorem}\label{th:p_implies_nontrivial}
Let $T=(V,E)$ be a rooted leafless tree with bounded degree and
consider the shift $S$ on $\Lp{T}$, with $\log_m(M)+1 < p \leq
\infty$, if $m>1$; or $p=\infty$, if $m=1$. Then $\ker(S)$ is
nontrivial.
\end{theorem}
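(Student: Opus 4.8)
The plan is to exhibit a single nonzero element of $\ker(S)$ lying in $\Lp{T}$, generalizing the explicit construction of Example~\ref{ex:p_implies_trivial}. I would build $f$ supported on the even levels of the tree, that is, I set $f(v)=0$ whenever $|v|$ is odd. With this choice the equation $(Sf)(u)=0$ holds automatically at every even vertex $u$, since all neighbors of $u$ (its parent and its children) sit at odd levels where $f$ vanishes. Thus the only constraints left to satisfy are those coming from odd vertices $v$, namely
\[
f(\Par(v)) + \sum_{w\in\Chi(v)} f(w)=0 .
\]

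To satisfy these, I would define $f$ on the even levels by a top-down recursion. Set $f(\o)=1$. Given the value $f(u)$ at an even vertex $u$, for each child $v\in\Chi(u)$ (an odd vertex) distribute the required sum equally among the children of $v$, setting
\[
f(w) = -\frac{f(u)}{\beta(v)} \qquad\text{for every } w\in\Chi(v).
\]
Since every even vertex $w$ with $|w|\geq 2$ has a unique parent $v=\Par(w)$ and a unique grandparent $u=\Par(v)$, this assigns each value $f(w)$ exactly once, so $f$ is well defined; and by construction $\sum_{w\in\Chi(v)}f(w)=-f(u)=-f(\Par(v))$, so the displayed constraint holds at every odd $v$. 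Hence $Sf=0$, and $f\neq 0$ because $f(\o)=1$. Unwinding the recursion, an even vertex $u$ at level $2k$ reached along the odd vertices $v_1,\dots,v_k$ of its path from the root satisfies $|f(u)|=\prod_{i=1}^k \beta(v_i)^{-1}$; in particular $|f(u)|\leq 1$ since the tree is leafless, so $f\in\Linfty{T}$. This already disposes of the case $p=\infty$, including the degenerate case $m=1$.

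For finite $p$ it remains to show $f\in\Lp{T}$ under the hypothesis $p>1+\log_m M$, i.e. $m^{p-1}>M$. Writing $A_k:=\sum_{|u|=2k}|f(u)|^p$ (a finite sum, since each level of a locally finite tree is finite), the recursion gives
\[
A_{k+1}=\sum_{|u|=2k} |f(u)|^p \sum_{v\in\Chi(u)} \beta(v)^{1-p}.
\]
Choosing $N$ so that every vertex at distance at least $2N$ lies in $\calM$, for $k\geq N$ every such $u$ has $\beta(u)\leq M$ and every child $v\in\Chi(u)$ has $\beta(v)\geq m$, whence, using $1-p<0$,
\[
\sum_{v\in\Chi(u)}\beta(v)^{1-p}\leq \beta(u)\,m^{1-p}\leq \frac{M}{m^{p-1}}<1 .
\]
Therefore $A_{k+1}\leq \tfrac{M}{m^{p-1}}A_k$ for all $k\geq N$, so the tail $\sum_{k\geq N}A_k$ is dominated by a convergent geometric series; adding the finitely many terms $A_0,\dots,A_{N-1}$ (each finite) yields $\|f\|_p^p=\sum_{k\geq 0}A_k<\infty$, so $f\in\Lp{T}$, completing the argument.

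The step I expect to be the main obstacle is this contraction estimate: one must separate the finitely many exceptional levels near the root, where $\beta$ is uncontrolled, from the deep region where $\beta\in[m,M]$, and check that equal distribution produces exactly the geometric ratio $M/m^{p-1}$ predicted by the hypothesis. The case $m=1$ is precisely where that ratio fails to be less than $1$ for finite $p$, which is why the theorem claims only $p=\infty$ there; that case is covered by the boundedness of $f$ noted above.
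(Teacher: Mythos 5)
Your proposal is correct and follows essentially the same route as the paper: the same explicit kernel element supported on even levels, defined by distributing $-f(\Par^2(w))$ equally among the children of each odd vertex, with boundedness handling $p=\infty$ and the same geometric contraction $A_{k+1}\leq (M/m^{p-1})A_k$ beyond the finitely many exceptional levels handling finite $p$. The only difference is cosmetic — you collapse the paper's two-step estimate through the odd level $2n+1$ into a single formula for $A_{k+1}$.
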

\begin{proof}
First, we will construct a function $f: V \to \C$ such that
$(Sf)(v)=0$ for all $v \in V$. Later, we will show that $f \in \Lp{T}$.

We proceed inductively. Define $f(\o)=1$. For all $v\in V$ with
$|v|=1$, define $f(v)=0$.  It is then clear that $(Sf)(\o)=0$. Now,
assume that, for a fixed $n \in \N$, we have defined $f(v)$ for all
$|v|\leq n$ and that $Sf(v)=0$ for all $v$ with $|v|<n$.

For each $v \in V$ with $|v|=n+1$, we define
$f(v)=\dfrac{-f(\Par^2(v))}{\beta(\Par(v))}$. Then for all $v
\in V$ with $|v|=n$ we have, since $T$ is leafless (and hence $\beta(v)\neq 0$), that
\begin{align*}
    (Sf)(v)= \sum_{u\sim v} f(u)
    &= f(\Par(v)) + \sum_{u \in \Chi(v)} f(u)\\
    &= f(\Par(v)) + \sum_{u \in \Chi(v)} \frac{-f(\Par^2(u))}{\beta(\Par(u))}\\
    &= f(\Par(v)) + \sum_{u \in \Chi(v)} \frac{-f(\Par(v))}{\beta(v)}\\
    &= f(\Par(v)) + \beta(v) \frac{-f(\Par(v))}{\beta(v)}\\
    &=0.
\end{align*}
This completes the induction step; we conclude that $Sf(v)=0$ for all $v \in V$.
Note that $f$ is clearly not identically equal to $0$, as
$f(\o)=1$. Note also that for all odd $n \in \N$, if $v \in V$ and
$|v|=n$, then $f(v)=0$.

It remains to be shown that $f \in \Lp{T}$. Observe that $|f(v)|\leq
1$ for all $v \in V$ and hence $f \in \Linfty{T}$. Assume then that $m>1$ and
$\log_m(M)+1 < p < \infty$. 

Since $T$ has bounded degree, there exists $N \in \N$ such that for
all $|v|\geq 2N$, we have $v \in \calM$. We now prove that
for all $n \in \N$ with $n\geq N$, 
\begin{equation}
\sum_{|u|=2n} \abs{f(u)}^p \leq \frac{M^{n-N}C}{m^{(p-1)(n-N)}}, \label{eq:treesum_nontrivial}
\end{equation}
where
\[
C = \sum_{|u|=2N} \abs{f(u)}^p.
\]
Clearly the desired inequality \eqref{eq:treesum_nontrivial} is true for
$n=N$. Now, assume \eqref{eq:treesum_nontrivial} holds for some fixed $n$ with
$n \geq N$. By the definition of $f$, we obtain
\begin{align*}
    \sum_{|u|=2(n+1)} \abs{f(u)}^p
    &= \sum_{|w|=2n+1} \sum_{u \in \Chi(w)} \abs{f(u)}^p\\
    &= \sum_{|w|=2n+1} \sum_{u \in \Chi(w)} \abs{\frac{-f(\Par^2(u))}{\beta(\Par(u))}}^p \\
    &= \sum_{|w|=2n+1} \beta(w) \abs{\frac{f(\Par(w)}{\beta(w)}}^p \\
    &= \sum_{|w|=2n+1} \frac{1}{\beta(w)^{p-1}} \abs{f(\Par(w))}^p\\
    &\leq \frac{1}{m^{p-1}} \sum_{|w|=2n+1}  \abs{f(\Par(w))}^p,
\end{align*}
where in the last line we use the definition of $m$. We also obtain
\begin{align*}
\sum_{|w|=2n+1}  \abs{f(\Par(w))}^p
&=\sum_{|v|=2n} \sum_{w \in \Chi(v)}  \abs{f(\Par(w))}^p \\
&= \sum_{|v|=2n} \sum_{w \in \Chi(v)}  \abs{f(v)}^p\\
&= \sum_{|v|=2n} \beta(v)  \abs{f(v)}^p\\
&\leq M \sum_{|v|=2n} \abs{f(v)}^p.
\end{align*}
Combining the past two expressions, and using the induction
hypothesis, we get
\[
\sum_{|u|=2(n+1)} \abs{f(u)}^p \leq \frac{M}{m^{p-1}}
\sum_{|v|=2n}\abs{f(v)}^p \leq
\frac{M}{m^{p-1}}\frac{M^{n-N}C}{m^{(p-1)(n-N)}}  =
\frac{M^{n+1-N}C}{m^{(p-1)(n+1-N)}}.
\]
Thus the desired inequality holds for $n+1$ and the induction step is complete.

Observe that, since $f(u)=0$ for all $v \in V$ such that $|u|$ is odd,
to show that $f \in \Lp{T}$ we only need to prove that 
\[
\sum_{n=N}^{\infty}  \sum_{|u|=2n} \abs{f(u)}^p
\]
converges. But
\[
\sum_{n=N}^{\infty}  \sum_{|u|=2n} \abs{f(u)}^p  \leq
\sum_{n=N}^{\infty}  \frac{M^{n-N}C}{m^{(p-1)(n-N)}}
=C\sum_{n=N}^{\infty}  \left(\frac{M}{m^{p-1}}\right)^{n-N},
\]
which converges for $p>1+\log_m M$. Therefore $f \in \Lp{T}$. Having
shown $Sf=0$ while $f\neq0$ and $f \in \Lp{T}$, we conclude that $\ker(S)$ is nontrivial.
\end{proof}

If $m=1$, the above theorem cannot be improved, as the following example shows (observe that the case $M=m=1$ is already covered by Theorem~\ref{th:p_implies_trivial}).

\begin{example}\label{ex:p_implies_nontrivial}
Let $M \in \N$, with $M>1$. Choose a sequence of positive integers
$(t_j)$ such that 
\[
\sum_{j=1}^\infty \frac{t_j}{M^{(j-1)p}}\] 
diverges for every $p \geq 1$. (For example, choose $t_j=2^{(j-1)^2}$ or $t_j=j^{j-1}$.)

Define a tree $T$ as follows. Set the root $\o$ and assign $M$
children to $\o$. Now, to each of these children, assign a path of
length $2t_1 - 1$. At each vertex at the end of each of these paths (we call each such vertex a bifurcation node), assign $M$ children. To each of these
children, attach a path of length $2t_2-1$. At each vertex at the end
of each of these paths, assign $M$ children. To each of these children,
attach a path of length $2t_3-1$. Keep going in this way.

Then, $\ker S$ is trivial in $\Lp{T}$ for $1 \leq p<\infty$.

\begin{figure}[h]
\newcommand\vrad{0.8mm}
\newcommand\ethick{0.1pt}

\usetikzlibrary{decorations.pathreplacing}
\usetikzlibrary{decorations.shapes}

\newcommand{\mpartbig}[2]{
\draw [fill] (#1,#2) circle [radius=\vrad]; 
\foreach \x in {-3,-2,-1}{
\draw [line width = \ethick] (#1,#2) -- (#1+\x,#2-3);
\draw [fill] (#1+\x,#2-3) circle [radius=\vrad];
}
\draw [line width = \ethick] (#1,#2) -- (#1+3,#2-3);
\draw [fill] (#1+3,#2-3) circle [radius=\vrad];
\draw [dashed] (#1-0.5,#2-2.5)--(#1+2,#2-2.5) node [midway,below] {$M$};
}
\newcommand{\stick}[2]{
\foreach \x in {1,2,3,4} {
\draw [fill] (#1,#2-\x) circle [radius=\vrad]; 
}
\draw [line width = \ethick] (#1,#2) -- (#1,#2-2);
\draw [line width = \ethick, dashed] (#1,#2-2) -- (#1,#2-3);
\draw [line width = \ethick] (#1,#2-3) -- (#1,#2-4);
}

\begin{tikzpicture}[scale=0.5]
\mpartbig{0}{0}
\foreach \y in {-3,-2,-1,3}{
\stick{\y}{-3}
}
\draw [decorate,decoration={brace,amplitude=10pt},xshift=-1.5em] (-3,-7) -- (-3,-3) node [midway,xshift=-4.5em] {$2t_1 -1$ edges};

\mpartbig{-3}{-7}
\foreach \y in {-3,-2,-1,3}{
\stick{-3+\y}{-10}
}
\draw [decorate,decoration={brace,amplitude=10pt},xshift=-1.5em] (-6,-14) -- (-6,-10) node [midway,xshift=-4.5em] {$2t_2 -1$ edges};

\end{tikzpicture}
\caption{The tree $T$ in Example~\ref{ex:p_implies_nontrivial}.}
\end{figure}

\end{example}
\begin{proof}
Let $f \in \Lp{T}$ with $Sf=0$.

We first label a few nodes, to explain our example. Let $v_0$ be a bifurcation node, and let $v_1, v_2 \dots v_{2s-1}$ be those vertices that comprise a path off of $v_0$; that is, $(v_0,v_1,...v_{2s-1})$ is a path of length $2s-1$ that begins at $v_0$.

Since $(Sf)(v_j)=0$ for each vertex $j=1, 2, \dots 2s-2$, we must have 
\[
f(v_0)=-f(v_2)=f(v_4)= \dots = (-1)^{s+1} f(v_{2s-2})
\]
and
\[
f(v_1)=-f(v_3)=f(v_5)= \dots = (-1)^{s+1} f(v_{2s-1}).
\]

If $f \neq 0$, there must exist a bifurcation node $w^*$ such that $f(w^*)\neq 0$ or
$f(v^*)\neq 0$ for some $v^* \in \Chi(w^*)$, otherwise $f$ would be
identically zero. Let us choose such a $w^*$ closest to the root.

We have two cases:

\begin{itemize}
\item If $f(w^*)\neq 0$, then assume, without loss of generality, that
$f(w^*)=1$. But observe that, if $v_1 \in \Chi^2(w^*)$, then
$f(v_1)=-1$, since $0=(Sf)(\Par(v_1))=f(w^*)+f(v_1)$. Using the observation
above, we obtain that, at the vertex $v_{2t_1-1}$ at the end of the
path, the absolute value of the function is also $1$. But then, if $w_1 \in
\Chi^2(v_{2t_1-1})$, then $|f(w_1)|=1$. Continuing in this manner, it
is clear that there are infinitely many vertices $u$ such that
$|f(u)|=1$, and hence $f \notin \Lp{T}$. Thus $\ker S$ is trivial.

\item If $v_0 \in \Chi(w^*)$ satisfies that $f(v_0)\neq 0$, again, we
  may assume that $f(v_0)=1$. By the observation above, we must have
  that
\[
|f(v_{0})|=|f(v_2)|= |f(v_4)|= \dots = |f(v_{2t_1-2})|=1,
\]
where $v_1, v_2, \dots, v_{2t_1-1}$ are the vertices in the path
attached to $v_0$. Hence there are at least $t_1$ vertices $u$ where $|f(u)|=1$.

Since $(Sf)(v_{2t_1-1})=0$, there must exist a vertex $w_0 \in
\Chi(v_{2t_1-1})$ with $|f(w_0)| \geq 1/M$. Again, by the observation
above, we must have 
\[
|f(w_{0})|=|f(w_2)|= |f(w_4)|= \dots = |f(w_{2t_2-2})|\geq 1/M,
\] 
where $w_1, w_2, \dots, w_{2t_2-1}$ are the vertices in the path
attached to $w_0$. Hence there are at least $t_2$ vertices $u$ where $|f(u)|\geq 1/M$. 

Since $(Sf)(w_{2t_2-1})=0$, there must exist a vertex $x_0 \in
\Chi(v_{2t_2-1})$ with $|f(x_0)| \geq 1/M^2$. Again, by the observation
above, we must have at least $t_3$ vertices $u$ where $|f(u)|\geq 1/M^2$. 

Continuing in this manner, it should be clear that, for each $j \in
\N$, we must have at least $t_j$ vertices $u$ with $|f(u)|\geq
1/M^{j-1}$. But this implies that
\[
\sum_{v\in V} |f(v)|^p \geq \sum_{j=1}^\infty t_j
\frac{1}{M^{(j-1)p}}, 
\]
which diverges. This proves that $f \notin \Lp{T}$ and hence $\ker S$
is trivial. \qedhere
\end{itemize}
\end{proof}

Can Theorem \ref{th:p_implies_nontrivial} be improved for the case $m>1$? We
have not been able to answer the question and we leave it open for
future research. As the above results show, the cases where
$\log_m(M)+1 < p \leq \log_M(m)+1$ seem to depend on how the numbers in the set $\{ \beta(v) : v \in V \}$ are ``distributed'' along the tree, and not only on the numbers $m$ and $M$. We plan to investigate the triviality of $\ker S$ for $L^p(T)$, when $\log_m(M)+1< p \leq \log_M(m)+1$ (which contains the important case $p=2$, unless $M=m$) in the future.

\subsection{The case where $T$ has leaves.} If the tree has leaves, the question of triviality of $\ker S$ seems to be much more complicated. 

For example, suppose the tree $T$, of bounded degree, has at least two vertices of degree $1$, say $u$ and $v$, and the distance $m$ between $u$ and $v$ is even. Let $u=u_0 \sim u_1 \sim u_2 \sim \dots \sim u_{m-1}\sim u_m=v$ be the path between them. Furthermore, if $m \geq 4$, assume that $\deg(u_2)=\deg(u_4)=\dots=\deg(u_{m-2})=2$. If we set $f(u_0)=f(u_4)=f(u_8)=\dots=f(u_{m-2})=1$, $f(u_2)=f(u_6)=\dots=f(u_m)=-1$ and $f(w)=0$ for any other vertex $w$, it is easy to check that $0 \neq f \in \ker S$ and $f\in \Lp{T}$ for $1 \leq p \leq \infty$. 

On the other hand,  we have been able to construct examples of trees where the above conditions do not hold and $\ker S$ is trivial, and also examples where the above conditions do not hold and $\ker S$ is nontrivial. In both cases the triviality of $\ker S$ depends on the value of $p$ and on the ``shape'' of the tree. What is the situation in general? A full characterization seems hard, and we leave the question open.

\section{Eigenvalues of finite graphs with infinite tails}\label{se:spectrum_tails}

The objective of this section is to present a method to find the
set of eigenvalues of certain infinite graphs obtained by attaching to a finite
graph an ``infinite tail''. Some of the results we obtain here have
been obtained by Golinskii in \cite{Golinskii1}: our method has the advantage
of being elementary, while Golinskii's method uses the theory of
Jacobi matrices.

Concretely, say $H$ is a graph with vertices $\{ v_0, v_1, v_2, \dots,
v_n\}$ and $L$ is the graph with vertices $\{ u_k \, : \, k \in \N \}$
and such that $u_k$ is adjacent to $u_j$ if and only if $|k-j|=1$
(what we call an ``infinite tail''). We form the new graph $G_H$ by
taking the union of the vertices of $H$ and $L$, with the same
adjacency relations as before, but adding an edge between the vertex
$v_0$ and $u_1$.

We would like to solve the equation $S f = \lambda
f$ for some complex-valued function $f$ defined on the set $V$ of
vertices of $G_H$. Furthermore, we would like $f$ to be in $\Lp{G_H}$, for $1\leq
p < \infty$. Let us look at the equations obtained from $Sf = \lambda
f$ when we evaluate
$f$ at the vertices $u_j$, for $j \in \N$. For simplicity, denote by
$f_j=f(u_j)$ for $j \in \N$ and $f_0=f(v_0)$. Then
\[
f_{j-1} +f_{j+1}= \lambda f_j
\]
for $j \in \N$. It is well known, and easy to see, that the solutions
to this system are given by
\[
f_j = C_1 b^j + C_2 c^j,
\]
where $b$ and $c$ are the solutions to the equation $t^2-\lambda t
+1=0$ and $C_1, C_2$ are constants to be determined. Since $b c = 1$
we may assume that $|b|\leq 1$ and $c=1/b$. Since we want $f \in
\Lp{G_H}$, we must have $C_2=0$ and either $|b|<1$ or $C_1=0$. Observe
that in the former case, since $\lambda=b + c = b+ 1/b$, we must have $|\lambda| > 2$.

In either case, the problem of finding $\lambda$ reduces to solving the finite system
of equations
\begin{equation}\label{eq:finite}
(Sf)(v_j) = \lambda f(v_j)
\end{equation}
for $j=0, 1, 2, \dots, n$ and such that $f(v_0)=C_1$ and $f(u_1)=C_1 b$. 

If $C_1=0$, then $f_j=0$ for all $j \in \N_0$ and hence the system of equations can be
seen as finding an eigenvalue for the adjacency matrix of $H$ such
that the corresponding eigenvector has value $0$ at $v_0$. 

If $C_1\neq 0$, dividing $f$ by a constant, we may always assume that
$C_1=1$. In this case, the system of equations does not correspond to
the eigenvalue equations for the adjacency matrix of $H$, since the
vertex $u_1$, which is not in $H$, is also adjacent to
$v_0$. Nevertheless, in several cases the finite system of equations
can be solved. We show a few examples where we can achieve this.

\subsection{The kite with infinite tail}

Let us consider a cycle graph with an infinite tail attached. That is,
$H$ is the $(n+1)$-cycle (with $n\geq 2$), which consists of the
vertices $v_0, v_1, v_2, \dots, v_n$  and the edges given by the
relations $v_j \sim v_{j+1}$ for $j=0, 1, 2 \dots, n-1$ and $v_n \sim
v_0$. The infinite graph $G_H$ is obtained by attaching an infinite
tail to the vertex $v_0$. We call this graph the {\em kite with an infinite
  tail}.
  
\begin{figure}[htbp]
\begin{tikzpicture} 

\draw (0,2) --(7,2);
\draw [dashed] (7, 2) -- (8,2);
\draw [fill=black] (0, 2) circle
[radius=0.05];
\node [left] at (0, 2) {$v_0$};
\foreach \x in {1,...,7}
{
\draw [fill=black] (\x, 2) circle
[radius=0.05];
\node [below] at (\x, 2) {${u_\x}$}; 
}

\draw (0,2) -- (-0.25, 2.5);
\draw [fill=black](-0.25, 2.5) circle
[radius=0.05];
\node [above] at (-0.25,2.5) {$v_1$};
\draw (-0.25, 2.5) -- (-1, 2.5);
\draw [fill=black](-1, 2.5) circle
[radius=0.05];
\node [above] at (-1,2.5) {$v_2$};
\draw (-1, 2.5) -- (-1.25, 2);
\draw [fill=black] (-1.25, 2) circle
[radius=0.05];
\node [left] at (-1.25, 2) {$v_3$};
\draw (-1.25, 2) -- (-1, 1.5);
\node [below] at (-1, 1.5) {$v_4$};
\draw [fill=black] (-1, 1.5) circle
[radius=0.05];
\draw [dashed] (-1, 1.5) -- (-0.25, 1.5);
\node [below] at (-0.25, 1.5) {$v_{n}$}; 
\draw [fill=black] (-0.25, 1.5) circle
[radius = 0.05];
\draw(-0.25, 1.5) -- (0,2); 
\end{tikzpicture}
\caption{The kite with an infinite tail.}
\end{figure}
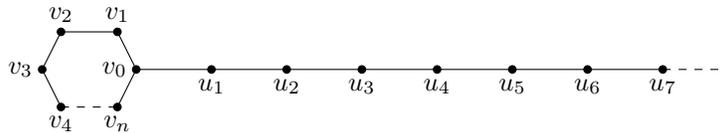 
We will use the argument outlined above to obtain the set of
eigenvalues of the kite.

Let us deal with the case $C_1=0$ first. It is well-known (see, for
example,  \cite[p.~8-9]{BrHa}) that the eigenvalues of the adjacency
matrix of the $(n+1)$-cycle are the numbers $\{ 2 \cos(2 \pi j/(n+1)) \ : j=0,
1, 2, \dots, [(n+1)/2] \}$ (here, as usual, $[x]$ denotes the floor of $x$).

The corresponding eigenspaces are obtained in the following manner. Let
$\omega=\exp(i 2\pi /(n+1))$. Then the eigenspace corresponding to the
eigenvalue $2 \cos(2 \pi j/(n+1))$ is the linear span of the vectors
\[
(1, \omega^j, \omega^{2j}, \omega^{3j}, \dots, \omega^{nj}) \qquad
\text{ and } \qquad (1, \omega^{-j}, \omega^{-2j}, \omega^{-3j}, \dots, \omega^{-nj}).
\]
Observe that if $n$ is even, then these eigenspaces are of dimension
$2$ if $j=1, 2, \dots, n/2$, and of dimension $1$ if $j=0$. If $n$
is odd, these eigenspaces are of dimension $2$ if $j=1, 2, \dots,
(n-1)/2$, and of dimension $1$ if $j=0$ or $j=(n+1)/2$.  

Each nonzero vector in the eigenspaces of dimension $1$ does not have a zero entry
at the position corresponding to $v_0$. But clearly, we can choose an
eigenvector in each of the eigenspaces of dimension $2$ such that the
value at $v_0$ is zero. Hence, the eigenvalues for the kite with
infinite tail, in this case, are the numbers
\[
\{ 2 \cos(2 \pi j/(n+1)) \ : j=1, 2, \dots, [n/2] \}.
\]

Now, let us deal with the case $C_1 \neq 0$. As we mentioned above, we
can always assume that $C_1 =1$. If we set $x_j:=f(v_j)$, the system of equations
\eqref{eq:finite} becomes:
\begin{align*}
1 + x_2 &= \lambda x_1 \\
x_2 + x_3 &= \lambda x_2 \\
x_3 + x_4 &= \lambda x_3 \\
\vdots &  \vdots \\
x_{n-1}+1 &= \lambda x_n \\
x_1+x_n+b &= \lambda.
\end{align*}
The first $n$ equations can be rewritten, in matrix form, as $A X=C$, where
\[
A= \begin{pmatrix}
\lambda & -1 & 0 & 0 & \dots & 0 & 0 & 0 \\
-1 & \lambda & -1 & 0 & \dots & 0 & 0 & 0 \\
0 & -1 & \lambda & -1 & \dots & 0 & 0 & 0 \\
\vdots & \vdots & \vdots & \vdots & & \vdots & \vdots & \vdots  \\
0 & 0 & 0 & 0 & \dots  & \lambda & -1 & 0 \\
0 & 0 & 0 & 0 & \dots  & -1 & \lambda & -1 \\
0 & 0 & 0 & 0 & \dots & 0  & -1 & \lambda
\end{pmatrix}
\quad \text{ and } \quad
C=\begin{pmatrix} 1 \\ 0 \\ 0 \\ \vdots  \\ 0 \\ 0 \\ 1 
\end{pmatrix}
\]

It is well known (e.g., \cite{BrHa}) that the eigenvalues of the matrix
\[
\begin{pmatrix}
0& 1 & 0 & 0 & \dots & 0 & 0 & 0 \\
1 & 0 & 1 & 0 & \dots & 0 & 0 & 0 \\
0 & 1 & 0 & 1 & \dots & 0 & 0 & 0 \\
\vdots & \vdots & \vdots & \vdots & & \vdots & \vdots & \vdots  \\
0 & 0 & 0 & 0 & \dots  & 0 & 1 & 0 \\
0 & 0 & 0 & 0 & \dots  & 1 & 0 & 1 \\
0 & 0 & 0 & 0 & \dots & 0  & 1 & 0
\end{pmatrix}
\]
are all in the interval $(-2,2)$. Since $|\lambda|>2$, it follows that
$A$ is invertible and there is a unique solution to the equation $A X
= C$.

A tedious, but straightforward, computation shows that the solution of $AX=C$ is given by the expressions
\[
x_k=\frac{b^k+b^{n-k+1}}{1+b^{n+1}},
\]
for $k=1, 2, \dots n$. The solutions also need to satisfy the equation
\[
x_1+x_n=\lambda - b.
\]
Hence we obtain
\[
\frac{b+b^{n}}{1+b^{n+1}} + \frac{b+b^{n}}{1+b^{n+1}} = \frac{1}{b},
\]
which simplifies to
\[
b^{n+1}+2b^2-1=0.
\]
Consider the polynomials $p_n(x)=x^{n+1}+2x^2-1$. The case $n=2$ is
special. In that case, $p_2(x)=x^3+2x^2-1=(x+1)(x^2+x-1)$ has roots
\[
-1, \quad \frac{-1-\sqrt{5}}{2} \approx -1.618, \quad \text{ and } \frac{-1+\sqrt{5}}{2}\approx 0.618, 
\]
Since only one of the roots is of modulus less than 1, this choice of $b$ gives the eigenvalue $\lambda=\sqrt{5}$.

If $n$ is odd, it is easy to check that $p_n$ is decreasing in the
interval $(-\infty,0)$ and  increasing in the interval
$(0,\infty)$. Since $p_n(0)=-1<0$ and $p_n(1)=p_n(-1)=2>0$, it follows
that there is exactly one root in the interval $(-1,0)$ and one in the
interval $(0,1)$. Hence, if $n$ is odd, in this case there will be two
eigenvalues (one the negative of the other) for the kite.

Now, if $n$ is even, $n\geq 4$, define
\[
\alpha_n=\left(\frac{-4}{n+1}\right)^{1/(n-1)}
\]
It is clear that $-1 < \alpha_n < 0$ and it is easy to check that
$p_n$ is increasing on the intervals $(-\infty, \alpha_n)$ and
$(0,\infty)$ and decreasing on the interval $(\alpha_n,0)$. Since
$p_n(-1)=0$, $p_n(0)=-1$ and $p_n(1)=2$, it follows that there is
exactly one root in the interval $(-1,0)$ and one in the interval
$(0,1)$.

In both cases, it turns out there are exactly two roots of $p_n$ (for
$n\neq 2$) in the interval $(-1,1)$, denoted by $x_1(n)$ and $x_2(n)$ and such that by $-1< x_2(n) < 0 <x_1(n)<1$.

Putting these results together with the computations for the case $C_1=0$, we obtain that
the eigenvalues of the kite $G_H$, in $\Lp{G_H}$, are 
\[
\{ -1, \sqrt{5} \}
\]
if $n=2$, and
\[
\{ 2 \cos(2 \pi j/(n+1)) \ : j=1, 2, \dots, [n/2] \} \bigcup \{ x_1(n)+
\frac{1}{x_1(n)},  x_2(n)+ \frac{1}{x_2(n)} \}
\]
if $n \geq 3$.

(For a complete spectral analysis of this graph in the case $p=2$, see \cite[Proposition 1.4]{Golinskii1}.)

\subsection{The fly-swatter with infinite tail}

Let us consider the fly-swatter. That is, $H$ is the complete graph
$K_{n+1}$, with $n\geq 2$, and $G_H$ is obtained by attaching an infinite tail to one
of the vertices (say the vertex $v_0$). We call this graph the {\em
  fly-swatter}. We will use the argument outlined above to obtain the
set of eigenvalues of the fly-swatter.

\begin{figure} [htbp]
  \begin{tikzpicture} 

\draw (0,2) --(7,2);
\draw [dashed] (7, 2) -- (8,2);
\draw [fill=black] (0, 2) circle
[radius=0.05];
\node [above right] at (0, 2) {$v_0$};
\foreach \x in {1,...,7}
{
\draw [fill=black] (\x, 2) circle
[radius=0.05];
\node [below] at (\x, 2) {${u_\x}$}; 
}

\draw (0,2) -- (-0.25, 2.5);
\draw [fill=black](-0.25, 2.5) circle
[radius=0.05];
\node [above] at (-0.25,2.5) {$v_1$};
\draw (-0.25, 2.5) -- (-1, 2.5);
\draw [fill=black](-1, 2.5) circle
[radius=0.05];
\node [above] at (-1,2.5) {$v_2$};
\draw (-1, 2.5) -- (-1.25, 2);
\draw [fill=black] (-1.25, 2) circle
[radius=0.05];
\node [left] at (-1.25, 2) {$v_3$};
\draw (-1.25, 2) -- (-1, 1.5);
\node [below] at (-1, 1.5) {$v_4$};
\draw [fill=black] (-1, 1.5) circle
[radius=0.05];
\draw [dashed] (-1, 1.5) -- (-0.25, 1.5);
\node [below] at (-0.25, 1.5) {$v_{n}$}; 
\draw [fill=black] (-0.25, 1.5) circle
[radius = 0.05];
\draw(-0.25, 1.5) -- (0,2); 

\draw (0,2) -- (-1, 2.5);
\draw (0,2) -- (-1.25, 2);
\draw (0,2) -- (-1, 1.5);
\draw (0,2) -- (-0.25, 1.5);

\draw (-0.25, 2.5) -- (-1.25, 2);
\draw (-0.25, 2.5) -- (-1, 1.5);
\draw (-0.25, 2.5) -- (-0.25, 1.5);

\draw (-1, 2.5) -- (-1, 1.5);
\draw (-1, 2.5) -- (-0.25,1.5);

\draw (-1.25, 2) -- (-0.25,1.5);
\end{tikzpicture}
\caption{The fly-swatter.}
\end{figure}
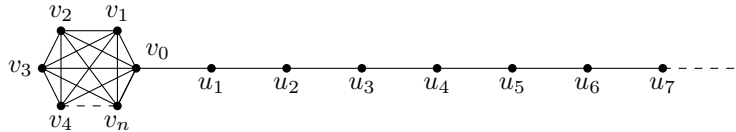

Again, let us deal with the case $C_1=0$ first. It is well-known (see, for
example,  \cite[p.~8]{BrHa}) that the set of eigenvalues of the adjacency
matrix of the complete graph on $(n+1)$ vertices is $\{ -1,
n\}$. The eigenspace corresponding to the eigenvalue $n$ is spanned
by the vector $(1,1, \dots, 1)$ and the eigenspace corresponding to
the eigenvalue $-1$ is the set
\[
\{ (v_0, v_1, v_2, \dots, v_n) \in \C^{n+1} \, : \, v_0 + v_1 + v_2 + \dots + v_n =0 \}.
\]
Since an eigenvector with $v_0=0$ can be chosen only in the case where
the eigenvalue is $-1$, this shows that in the case $C_1=0$, the only
eigenvalue of the fly-swatter is $-1$.

Now, if $C_1=1$, and we set $x_j:=f(v_j)$, the system of equations
\eqref{eq:finite} becomes:
\begin{align*}
1+ x_2 + x_3+ x_4 + \dots + x_n ={} & \lambda x_1 \\
1+ x_1 + x_3+ x_4 + \dots + x_n ={} & \lambda x_2 \\
1+ x_1 + x_2+ x_4 + \dots + x_n ={} & \lambda x_2 \\
\vdots  & \vdots \\
1+ x_1 + x_2+ x_3 + \dots + x_{n-1} ={} & \lambda x_n \\
x_1 + x_2+ x_4 + \dots + x_n + b ={} & \lambda \\
\end{align*}

The first $n$ equations can be rewritten, in matrix form, as $A X=C$, where
\[
A= \begin{pmatrix}
-\lambda & 1 & 1 & 1 & \dots & 1 & 1 & 1 \\
1 & -\lambda & 1 & 1 & \dots & 1 & 1 & 1 \\
1 & 1 & -\lambda & 1 & \dots & 1 & 1 & 1 \\
\vdots & \vdots & \vdots & \vdots & & \vdots & \vdots & \vdots  \\
1 & 1 & 1 & 1 & \dots  & -\lambda & 1 & 1 \\
1 & 1 & 1 & 1 & \dots  & 1 & - \lambda & 1 \\
1 & 1 & 1 & 1 & \dots & 1  & 1 & -\lambda
\end{pmatrix}
\quad \text{ and } \quad
C=\begin{pmatrix} -1 \\ -1 \\ -1 \\ \vdots  \\ -1 \\ -1 \\ -1 
\end{pmatrix}
\]
and the solution $(x_1, x_2, x_3, \dots, x_n)$ must satisfy the last equation:
\[
x_1 + x_2+ x_4 + \dots + x_n = \lambda -b.
\]

Now, the matrix $A$ is invertible if $\lambda \neq -1$ and $\lambda
\neq n-1$, since if we set $\lambda=0$, then the matrix $A$ is the adjacency
matrix for the complete graph on $n$ vertices. Since we are dealing
with the case $C_1=1$, we have that $\lambda >2$ and hence we only
need to discard the case $\lambda=n-1$.  Clearly, however, $C \in \Ker
A$ if $\lambda=n-1$, and $A$ is selfadjoint, so we must have that the range of
$A$ is orthogonal to the kernel of $A$. Hence there is no solution in
this case.

Now, for every other value of $\lambda$ there is a unique solution to
$A X = C$, and it can be easily checked that it is given by
\[
x_1=x_2=\dots = x_n= \frac{1}{\lambda-(n-1)}.
\]
Now, this solution should satisfy the equation
\[
x_1 + x_2+ x_4 + \dots + x_n + b = \lambda,
\]
from which we obtain $\frac{n}{\lambda-(n-1)}+b =\lambda
$. Simplifying and using the fact that $\lambda=b+1/b$ we obtain
\[
(n-1) b^2 + (n-1) b - 1 =0.
\]
The solutions to this equation are
\[
b=-\frac{1}{2}\pm \frac{1}{2} \sqrt{\frac{n+3}{n-1}}.
\]
It is easy to check that $0<-\frac{1}{2}+\frac{1}{2}
\sqrt{\frac{n+3}{n-1}}<1$ and  $-2<-\frac{1}{2}-\frac{1}{2}
\sqrt{\frac{n+3}{n-1}}<-1$. Hence, $b=-\frac{1}{2}+\frac{1}{2}
\sqrt{\frac{n+3}{n-1}}$ and
\[
\lambda=b+1/b= \frac{1}{2}\left(-1+ \sqrt{\frac{n+3}{n-1}}\right) +
\frac{2}{-1+ \sqrt{\frac{n+3}{n-1}}}
\]

Putting these results together with the computations for the case $C_1=0$, we obtain that
the eigenvalues of the fly-swatter $G_H$, in $\Lp{G_H}$ are
\[
\left\{  -1, \frac{1}{2}\left(-1+ \sqrt{\frac{n+3}{n-1}}\right) + \frac{2}{-1+ \sqrt{\frac{n+3}{n-1}}} \right\}.
\]

(For the spectral analysis of the complete bipartite graph with tail in the case $p=2$, see \cite[Example 5.6]{Golinskii1}.)

\subsection{The comb with an infinite tail}

Let us consider the comb with an infinite tail. That is, $H$ is the
graph with vertices $\{ v_1, v_2, \dots, v_n, w_1, w_2, \dots w_n \}$
and such that $v_j \sim v_{j+1}$ for $j=1, 2, \dots, n-1$, and
$v_j\sim w_j$ for $j=1, 2, \dots, n$, where $n \geq 2$; and  $G_H$ is
obtained by attaching an infinite tail to the vertex $v_n$.

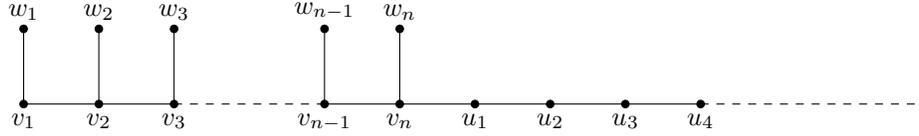
\begin{figure}[htbp]
\begin{tikzpicture} 

\draw (1,2) --(3,2);
\foreach \x in {1,...,3}
{
\draw [fill=black] (\x, 2) circle
[radius=0.05];
\node [below] at (\x, 2) {$v_{\x}$}; 
}
\draw (1,2) -- (1,3); 
\node [above] at (1,3) {$w_1$};
\draw [fill=black] (1,3) circle
[radius=0.05];
\draw [fill=black] (2,3) circle
[radius=0.05];
\node [above] at (2,3) {$w_2$};
\draw (2,2) -- (2,3);
\draw (3,2) -- (3,3);
\draw [fill=black] (3,3) circle
[radius=0.05];
\node [above] at (3,3) {$w_3$};
\draw [dashed] (3,2) -- (5,2);

\draw [fill=black] (5,2) circle
[radius=0.05];
\node [below] at (5,2) {$v_{n-1}$};
\draw (5,2) -- (5,3);
\node [above] at (5,3) {$w_{n-1}$};
\draw [fill=black] (5,3) circle
[radius=0.05];
\draw (5,2) -- (6,2);
\draw [fill=black] (6,2) circle
[radius=0.05];
\node [below] at (6,2) {$v_n$};
\draw (6,2) -- (6,3);
\node [above] at (6,3) {$w_n$};
\draw [fill=black] (6,3) circle
[radius=0.05];

\draw (6,2) -- (7,2);
\draw [fill=black] (7,2) circle
[radius=0.05];
\node [below] at (7,2) {$u_1$};

\draw (7,2) -- (8,2);
\draw [fill=black] (8,2) circle
[radius=0.05];
\node [below] at (8,2) {$u_2$};

\draw (8,2) -- (9,2);
\draw [fill=black] (9,2) circle
[radius=0.05];
\node [below] at (9,2) {$u_3$};

\draw (9,2) -- (10,2);
\draw [fill=black](10,2) circle
[radius=0.05];
\node [below] at (10,2) {$u_4$};

\draw [dashed] (10,2) -- (13,2);
\end{tikzpicture}
\caption{The comb with an infinite tail.}
\end{figure}

We call this graph $G_H$ the {\em comb with an infinite tail}
(or just the comb, for short). We will use the argument outlined above
to obtain the set of eigenvalues of the comb.

Let us show that there are no eigenvalues in the case $C_1=0$. Assume
$f$ is an eigenvector with $f(v_n)=0$ and set $y_j=f(v_j)$ and
$z_j=f(w_j)$ for $j=1, 2, \dots, n$. The adjacency matrix of $S$  for
the graph $H$ is the following $2n \times 2n$ matrix:
\[
\left(
\begin{array}{cccccc|cccccc}
0 & 1 & 0 &\dots & 0 & 0 & 1 & 0 & 0 & \dots & 0 & 0 \\
1 & 0 & 1 &\dots & 0 & 0 & 0 & 1 & 0 & \dots & 0 & 0 \\
0 & 1 & 0 &\dots & 0 & 0 & 0 & 0 & 1 & \dots & 0 & 0 \\
\vdots & \vdots & \vdots & & \vdots & \vdots & \vdots & \vdots  & \vdots & & \vdots & \vdots \\
0 & 0 & 0 &\dots & 0 & 1 & 0 & 0  & 0  &\dots & 1 & 0\\
0 & 0 & 0 &\dots & 1 & 0 & 0 & 0  & 0  &\dots & 0& 1 \\
\hline
1 & 0 & 0 &\dots & 0 & 0 & 0 & 0 & 0 & \dots & 0 & 0\\
0 & 1 & 0 &\dots & 0 & 0 & 0 & 0 & 0 & \dots & 0 & 0\\
0 & 0 & 1 &\dots & 0 & 0 & 0 & 0 & 0 & \dots & 0 & 0\\
\vdots & \vdots & \vdots & & \vdots & \vdots &  \vdots & \vdots &  \vdots & & \vdots & \vdots \\
0 & 0 & 0 &\dots & 1 & 0 & 0 & 0  & 0  &\dots & 0 & 0 \\
0 & 0 & 0 &\dots & 0 & 1 & 0 & 0  & 0  &\dots & 0 & 0 
\end{array}
\right).
\]
If we set $T$ to be the upper-left $n \times n$ corner of the matrix
above, $Y=(y_1, y_2, \dots, y_n)$ and $Z=(z_1, z_2, \dots, z_n)$, the
eigenvalue problem $Sf=\lambda f$ becomes $TY+Z=\lambda Y$ and
$Y=\lambda Z$. If $\lambda=0$ then $Y=0$ and hence $Z=0$, hence
$\lambda \neq 0$.

Now, since we want $y_n=0$, it follows that $z_n=0$. The last entry of
the equation $TY+Z = \lambda Y$ is $y_{n-1}+z_n=\lambda y_n$ which
implies that $y_{n-1}=0$ and hence that $z_{n-1}=0$. The penultimate
entry of $TY+Z = \lambda Y$ is $y_{n-2}+y_n + z_{n-1}=\lambda y_{n-1}$
which implies that $y_{n-2}=0$ and hence $z_{n-2}=0$. Proceeding in
this manner, we obtain that $Y=0$ and $Z=0$ and hence there are no
eigenvalues in the case $C_1=0$.

Now let us deal with the case $C_1=1$. In this case, setting
$x_j=f(v_j)$ and $a_j=f(w_j)$ for $j=1, 2, \dots, n$, the system of
equations \eqref{eq:finite} becomes
\begin{align*}
a_1+ x_2 ={} & \lambda x_1 \\
a_{j}+ x_{j-1}+ x_{j+1} ={} & \lambda x_{j} \qquad \text{ for all } j=2, 3, \dots, n-2 \\
a_{n-1}+ x_{n-2}+ 1 ={} & \lambda x_{n-1}\\
a_n+ x_{n-1}+ b ={} & \lambda  \\
x_j ={} & \lambda a_j \qquad \text{ for all } j=1, 2, \dots, n,
\end{align*}
where, as mentioned before, $x_n=1$.

First, observe that if $\lambda=0$, this would contradict the fact
that $1=x_n=\lambda a_n$, and hence we may assume that $\lambda \neq
0$. Since we have $a_j=\frac{x_j}{\lambda}$ for all $j=1, 2, \dots, n$
we can rewrite the above system as
\begin{eqnarray*}
\frac{1}{\lambda} x_1+ x_2 & ={} & \lambda x_1 \\
\frac{1}{\lambda} x_{j}+ x_{j-1}+ x_{j+1} & ={} & \lambda x_{j} \qquad \text{ for all } j=2, 3, \dots, n-2 \\
\frac{1}{\lambda} x_{n-1}+ x_{n-2}+ 1 & ={} & \lambda x_{n-1}\\
\frac{1}{\lambda} + x_{n-1}+ b & ={} & \lambda.
\end{eqnarray*}

Thus we need to find a solution to the system $AX=C$ where
\[
A= \begin{pmatrix}
\lambda -\frac{1}{\lambda} & -1 & 0 & 0 & \dots & 0  & 0 \\
-1 & \lambda -\frac{1}{\lambda}& -1 & 0 & \dots & 0 & 0  \\
0 & -1 & \lambda - \frac{1}{\lambda} & -1 & \dots & 0 & 0 \\
\vdots & \vdots & \vdots & \vdots & & \vdots & \vdots \\
0 & 0 & 0 & 0 & \dots  & \lambda-\frac{1}{\lambda} & -1 \\
0 & 0 & 0 & 0 & \dots  & -1 &  \lambda-\frac{1}{\lambda} \\
0 & 0 & 0 & 0 & \dots & 0 & 1 
\end{pmatrix}
\quad \text{ and } \quad
C=\begin{pmatrix} 0 \\ 0 \\ 0 \\ \vdots  \\ 0 \\ 1 \\ \lambda-\frac{1}{\lambda}-b 
\end{pmatrix},
\]
where $A$ is a $n\times (n-1)$ matrix. Clearly, the columns of $A$ are
linearly independent, and hence the rank of $A$ is $n-1$. Therefore
there is a solution to $AX=C$ if and only if the augmented matrix $(A
| C)$ has rank $n-1$. This occurs if and only if
\[
\det\begin{pmatrix}
\lambda -\frac{1}{\lambda} & -1 & 0 & 0 & \dots & 0  & 0 & 0\\
-1 & \lambda -\frac{1}{\lambda}& -1 & 0 & \dots & 0 & 0  & 0\\
0 & -1 & \lambda - \frac{1}{\lambda} & -1 & \dots & 0 & 0 & 0\\
\vdots & \vdots & \vdots & \vdots & & \vdots & \vdots & \vdots  \\
0 & 0 & 0 & 0 & \dots  & \lambda-\frac{1}{\lambda} & -1 & 0\\
0 & 0 & 0 & 0 & \dots  & -1 &  \lambda-\frac{1}{\lambda} & 1\\
0 & 0 & 0 & 0 & \dots & 0 & 1 & \lambda-\frac{1}{\lambda}-b 
\end{pmatrix}=0
\]
Since $\lambda=b+\frac{1}{b}$, we have
$\lambda-\frac{1}{\lambda}=\frac{b^4+b^2+1}{b(b^2+1)}$ and $\lambda-\frac{1}{\lambda}
-b=\frac{1}{b(b^2+1)}$ and the above determinant can be thought of as
a function of $b$. Denote the determinant of the $n \times n$ matrix
by $P_n(b)$.

An explicit expression for the function $P_n$ can be obtained
recursively. Indeed, set $P_1(b):=\lambda-\frac{1}{\lambda} -b$ and compute
$P_2$ directly to obtain
\[
P_2(b)=\frac{-b^6-b^4+1}{(b(b^2+1))^2}.
\]
Then, it easily follows that $P_n(b)= (\lambda-\frac{1}{\lambda}) P_{n-1}(b) - P_{n-2}(b)$, for $n\geq 3$.

We would like to find out if there are solutions $b$ of the equation
$P_n(b)=0$ of modulus less than one. It can be shown by induction
that, for $n\geq 2$ we have that $h_n(b):=(b(b^2+1))^n P_n(b)$ is an
even polynomial of degree $4n-2$, the coefficient of the term of
degree $4n-2$ is $-1$ and constant term is equal to $1$. In fact, it
can be easily seen that, if we set $h_1(b)=1$ and $h_2(b)=-b^6-b^4+1$,
the polynomial $h_n$ is given by
\[
h_n(b)= (b^4+b^2+1) h_{n-1}(b) - b^2(b^2+1)^2 h_{n-2}(b),
\]
for $n \geq 3$.

To obtain eigenvalues of the comb, it therefore suffices to compute the
roots of $h_n$ of modulus less than one. Since $h_n(0)=1$ and the
coefficient of the term of degree $4n-2$ is $-1$, it follows that $h_n$
has at least one positive root, for each $n\geq 2$.

It is straightforward to check that $h_2(b)=-b^6-b^4+1$ has exactly
one positive root, which happens to be less than $1$.  Let us denote
by $\alpha_n$ the smallest positive root of $h_n$, for $n\geq 2$. We
will show that the sequence $(\alpha_n)$ is decreasing. First, observe
that
\[
h_{3}(\alpha_2)= (\alpha_2^4+\alpha_2^2+1) h_{2}(\alpha_2) -
\alpha_2^2(\alpha_2^2+1)^2 h_{1}(\alpha_2)=-
\alpha_2^2(\alpha_2^2+1)^2.
\]
Hence $h_{3}(\alpha_2)<0$ and since $h_3(0)=1>0$, it follows that
$\alpha_3< \alpha_2$. Now, assume that $\alpha_2>\alpha_3>\dots >
\alpha_{k-1}> \alpha_k$. Then,
\[
h_{k+1}(\alpha_k)= (\alpha_k^4+\alpha_k^2+1) h_{k}(\alpha_k) -
\alpha_k^2(\alpha_k^2+1)^2 h_{k-1}(\alpha_k)=-
\alpha_k^2(\alpha_k^2+1)^2 h_{k-1}(\alpha_k).
\]
Since $h_{k-1}(\alpha_k)>0$ (otherwise, since $h_{k-1}(0)=1$, it would
follow that $\alpha_{k-1} \leq \alpha_k$) we have that
$h_{k+1}(\alpha_k) <0$ and hence $\alpha_{k+1} < \alpha_k$. Since the
sequence $(\alpha_n)$ is decreasing and $\alpha_2 < 1$, it follows
that $h_n$ has at least two roots in the interval $(-1,1)$ and hence
the comb has at least two eigenvalues (one the negative of the other).

One can check using a computer algebra system or graphical software,
that for $2 \leq n \leq 6$ there are exactly two roots of $h_n$ inside
the interval $(-1,1)$, for $7 \leq n \leq 10$ there are exactly four
roots of $h_n$ inside the interval $(-1,1)$, for $11 \leq n\leq 14$
there are exactly six roots of $h_n$ inside the interval $(-1,1)$, for
$15 \leq n \leq 19$ there are exactly eight roots of $h_n$ inside the
interval $(-1,1)$, for $20 \leq n \leq 23$ there are exactly ten roots
of $h_n$ inside the interval $(-1,1)$, and for $n\geq 24$ there are at
least twelve roots of $h_n$ inside the interval $(-1,1)$.

This will correspond to two eigenvalues of the comb for $2 \leq n \leq
6$, four eigenvalues of the comb for $7 \leq n \leq 10$, to six
eigenvalues of the comb for $11 \leq n\leq 14$, to eight eigenvalues
of the comb for $15 \leq n \leq 19$, to ten eigenvalues of the comb
for $20 \leq n \leq 23$, and to at least twelve eigenvalues of the
comb for $n\geq 24$. Can one compute the number of eigenvalues for any
given $n$? We leave that question open for future research.

One can also check, with the help of a computer, that for $2\leq n \leq
30$ all eigenvalues of the comb are contained in the set
$[-1-\sqrt{2},-2] \cup [2,1+\sqrt{2}]$. Does all of the above hold for
all values of $n$?

Note added: Professor L.~Golinskii has shown \cite{Golinskii3} that all eigenvalues of the comb with an infinite
  tail attached are simple and are contained in  the set $(-1-\sqrt{2},-2) \cup
  (2,1+\sqrt{2})$. Furthermore, he has proved that the largest positive eigenvalue of the $n$-comb
  tends to $1+\sqrt{2}$ as $n\to \infty$. Also, he has obtained an
  explicit expression for the number of eigenvalues, in terms of $n$,
  for the $n$-comb.

\subsection{Full spectrum of finite graphs with an infinite tail}

We should point out that the above results actually give the full
spectrum of each of the graphs with an infinite tail attached. Indeed,
in \cite{Duren} Duren finds the spectrum of Toeplitz operators with
finitely many nonzero diagonals. Applying those results to the case of
the Toeplitz operator with matrix
\[
T=\begin{pmatrix}
0 & 1 & 0 & 0 & 0 & \dots \\
1 & 0 & 1 & 0 & 0 & \dots \\
0 & 1 & 0 & 1 & 0 & \dots \\
0 & 0 & 1 & 0 & 1 &  \dots \\
0 & 0 & 0 & 1 & 0 &  \dots \\
\vdots & \vdots & \vdots & \vdots & \vdots & \ddots
\end{pmatrix}
\]
we obtain that the spectrum of $T$ in $\ell^p$ ($1\leq p \leq \infty$)
is the interval $[-2,2]$. In fact, it can be deduced from Duren's
result that the essential spectrum (see, for example, \cite{Muller} for the
definition of the essential spectrum) of $T$ is the interval
$[-2,2]$. 

Observe that $T$ is the adjacency matrix of the infinite tail. Since
all the graphs in this section can be seen as perturbations of the
infinite tail by an operator of finite rank, it follows that the
essential spectrum of each finite graph with an infinite tail attached
(in $\ell^p$, with $1 \leq p \leq \infty$) is the interval
$[-2,2]$. Hence, the spectrum (in $\ell^p$) of each finite graph with an infinite
tail attached is the union of the interval $[-2,2]$ and the set of
eigenvalues (see \cite{Muller} for details on the essential spectrum and
point spectrum).


\section{Spectrum of the infinite comb}

Let us consider the ``infinite comb''. That is, $G$ is the graph
formed with the vertices $\{ v_j, w_j \, : \, j \in \N \}$ and the
edges given by the relations $v_j \sim w_j$ for all $j \in \N$, $v_j
\sim  v_{j+1}$ for all $j \in \N$. First, let us observe that there
are no eigenvalues for $S$ in this case. Indeed, assume $Sf=\lambda f$
and set $x_j=f(v_j)$ and $a_j=f(w_j)$ for $j \in \N$. We obtain the
equations
\begin{align*}
a_1+ x_2 ={} & \lambda x_1, \\
a_{j}+ x_{j-1}+ x_{j+1} ={} & \lambda x_{j}, \qquad \text{ for all } j\geq 2, \\
x_j ={} & \lambda a_j, \qquad \text{ for all } j\in \N.
\end{align*}
Clearly, if $\lambda=0$, then $f=0$ so we may assume $\lambda \neq 0$. Then, the above equations can be written as
\begin{equation}\label{eq:infinite}
\begin{split}
x_2 & =  \left(\lambda -\frac{1}{\lambda}\right) x_1, \\
x_{j-1}+ x_{j+1} & =  \left(\lambda -\frac{1}{\lambda}\right) x_{j}, \qquad \text{ for all } j\geq 2.
\end{split}
\end{equation}
But, as is well-known, the infinite matrix
\[
T:=\begin{pmatrix}
0& 1 & 0 &  \dots & 0 & 0 & 0 & \cdots \\
1 & 0 & 1 &  \dots & 0 & 0 & 0 & \cdots \\
0 & 1 & 0 &  \dots & 0 & 0 & 0 & \cdots \\
\vdots & \vdots & \vdots & & \vdots & \vdots & \vdots  & \cdots \\
0 & 0 & 0 &  \dots  & 0 & 1 & 0 & \cdots \\
0 & 0 & 0 &  \dots  & 1 & 0 & 1 & \cdots \\
0 & 0 & 0 &  \dots & 0  & 1 & 0 & \cdots \\
\vdots & \vdots & \vdots & & \vdots & \vdots & \vdots  & \ddots \\
\end{pmatrix}
\]
has no eigenvalues in $\ell^p$, for $1\leq p < \infty$,  and hence the
system \eqref{eq:infinite} has no $p$-summable solution.

Let us compute the spectrum of $S$ in $\Lp{G}$. Let $f_j:=\chi_{v_j}$
and $g_j:=\chi_{w_j}$. Then it is clear that the set $\{ f_j  \, : \,
j \in \N \} \cup \{ g_j, \, : \,  j \in \N \}$ is a Schauder basis
for $\Lp{G}$. If we write the matrix of $S$ with respect to this
basis, it is clear that in can be written, in block form, as
\[
S=\left(
\begin{array}{c|c}
T & I \\
\hline
I & 0
\end{array}
\right),
\]
where $T$ is as defined above. To find the spectrum of $S$ we need to
find for what values of $\lambda$ the matrix
\[
S-\lambda=\left(
\begin{array}{c|c}
T -\lambda & I \\
\hline
I & -\lambda
\end{array}
\right),
\]
is invertible. 
If $\lambda=0$, then $S$ is invertible and
\[
S^{-1}=\left(
\begin{array}{c|c}
0 & I \\
\hline
I & -T
\end{array}
\right),
\]
Now assume $\lambda\neq 0$. It is well known and easily verified that a $2\times 2$ block
matrix of the form above is invertible if and only if
$T-\lambda+\frac{1}{\lambda}$ is invertible. In that case, in the style of
\cite[p. 18]{HoJo}, the inverse is
\[
(S-\lambda)^{-1}=\left(
\begin{array}{c|c}
(T -\lambda + \frac{1}{\lambda})^{-1} & \frac{1}{\lambda}(T -\lambda +
                                        \frac{1}{\lambda})^{-1}
  \\[.1cm]
\hline
\rule{0pt}{1\normalbaselineskip}
\frac{1}{\lambda }(T -\lambda + \frac{1}{\lambda})^{-1} &
                                                          -\frac{1}{\lambda}
                                                          +
                                                          \frac{1}{\lambda^2}
                                                          (T -\lambda
                                                          +
                                                          \frac{1}{\lambda})^{-1}
\end{array}
\right).
\]
Hence $\lambda \in \sigma(S)$ if and only $\lambda-\frac{1}{\lambda}
\in \sigma(T)$. Recall  $\sigma(T)=[-2,2]$ (see, for example,
\cite{Duren}). It can easily be checked that $\lambda-\frac{1}{\lambda}
\in [-2,2]$ if and only if $\lambda \in [-1-\sqrt{2},1-\sqrt{2}]\cup
[-1+\sqrt{2}, 1+\sqrt{2}]$. Hence
\[
\sigma(S)=[-1-\sqrt{2},1-\sqrt{2}]\cup [-1+\sqrt{2}, 1+\sqrt{2}].
\]

\end{document}